\renewcommand{\email}[2][]{%
  \ifx\emails\@empty\relax\else{\g@addto@macro\emails{,\space}}\fi%
  \@ifnotempty{#1}{\g@addto@macro\emails{\textrm{(#1)}\space}}%
  \g@addto@macro\emails{#2}%
}
\DeclareFontFamily{U}{tipa}{}
\DeclareFontShape{U}{tipa}{m}{n}{<->tipa10}{}
\newcommand{\arc@char}{{\usefont{U}{tipa}{m}{n}\symbol{62}}}%
\newcommand{\arc}[1]{\mathpalette\arc@arc{#1}}
\newcommand{\arc@arc}[2]{%
  \sbox0{$\m@th#1#2$}%
  \vbox{
    \hbox{\resizebox{\wd0}{\height}{\arc@char}}
    \nointerlineskip
    \box0
  }%
}
\newtheorem{thm}{Theorem}[section]
\newtheorem{cor}[thm]{Corollary}
\newtheorem{lem}[thm]{Lemma}
\newtheorem{prop}[thm]{Proposition}
\newtheorem{theorem}{Theorem}
\newtheorem{corollary}[theorem]{Corollary}
\theoremstyle{definition}
\newtheorem{definition}[thm]{Definition}
\newtheorem{example}[thm]{Example}
\newtheorem{rem}[thm]{Remark}
\numberwithin{equation}{section}
\DeclareMathOperator{\susp}{Susp}
\DeclareMathOperator{\curv}{curv}
\DeclareMathOperator{\dime}{dim}
\newcommand{\RR}{\mathbb{R}}
\newcommand{\Z}{\mathbb{Z}}
\newcommand{\CC}{\mathbb{C}}
\newcommand{\QQ}{\mathbb{Q}}
\newcommand{\Sing}{\mathit{Sing}}
\newcommand{\RP}{\mathbb{R}P}
\newcommand{\soso}{\mathrm{T^{2}}}
\newcommand{\Circ}{\mathrm{T^{1}}}
\newcommand{\Ss}{\mathrm{S}}
\newcommand{\ZZ}{\mathbb{Z}}
\begin{document}


\title[Torus actions on Alexandrov $4$-spaces]{Torus actions on Alexandrov $4$-spaces}


\author[D.~Corro]{Diego Corro$^\ast$}
\address[D.~Corro]
{Universität zu Köln, Germany and\\ Instituto de Matem\'{a}ticas, Universidad Nacional Aut\'{o}noma de M\'{e}xico (UNAM), Oaxaca de Ju\'{a}rez, M\'{e}xico.}

\email[D.~Corro]{\href{mailto:diego.corro@im.unam.mx}{diego.corro.math@gmail.com}}
\thanks{$^\ast$Supported by  CONACyT-DAAD Doctoral scholarship no.~409912, DFG RTG-2229 ``Asymptotic Invariants and Limits of Groups and Spaces", DGAPA-UNAM Postdoctoral Fellowship of the Institute of Mathematics, DFG-Eigenestelle Fellowship CO 2359/1-1.}

\author[J.~N\'u\~nez-Zimbr\'on]{Jes\'us N\'u\~nez-Zimbr\'on$^{\ast\ast}$}
\address[J.~N\'u\~nez-Zimbr\'on]
{Facultad de Matemáticas, Universidad Nacional Aut\'{o}noma de M\'{e}xico (UNAM), México and\\ Centro de Investiagci\'{o}n en Matem\'{a}ticas (CIMAT), Guanajuato, M\'{e}xico.}
\thanks{$^{\ast\ast}$Supported by a MIUR SIR-grant ’Nonsmooth Differential Geometry’ (RBSI147UG4) and a DGAPA-UNAM Postdoctoral Fellowship of the Center of Mathematical Sciences.}

\author[M.~Zarei]{Masoumeh Zarei$^{\ast\ast\ast}$}
\address[M.~Zarei]{WW-Univsersität Münster,\\ 
Universit\"at Augsburg, Institut f\"ur Mathematik, 86135 Augsburg, Germany, and\\
Beijing International Center for Mathematical Research, Beijing, China.}
\thanks{$^{\ast\ast\ast}$Supported by DFG project AM 342/4-1 and DFG grant GA 2050/2-1 within the  Priority Program SPP-2026 ``Geometry at Infinity''.}

\thanks{The authors would like to thank the \textit{IV Encuentro Conjunto RSME-SMM} where initial talks leading to this work took place. In particular, we thank F.~Galaz-Garc\'ia and L.~Guijarro, organizers of the Riemannian/Metric geometry  special session for very useful conversations. The authors also thank the anonymous referee for her or his helpful comments.}


\subjclass[2010]{53C23, 57S15, 57S25}
\keywords{$4$-manifold, torus action, Alexandrov space}

\begin{abstract}
We obtain an equivariant classification for orientable, closed, four-dimensional  Alexandrov spaces admitting an isometric torus action. This generalizes the equivariant classification of Orlik and Raymond of closed four-dimensional manifolds with torus actions.  Moreover, we show that such Alexandrov spaces are equivariantly homeomorphic to  $4$-dimensional Riemannian orbifolds with isometric $\soso$-actions. We also obtain a partial homeomorphism classification.  
\end{abstract}

\maketitle
\setcounter{tocdepth}{1}



\section{Introduction}\label{S: Introduction}
In in the presence of a continuous action by a fixed Lie group $G$ on a  space $X$ 
there are usually two kinds of classifications.   By \emph{homeomorphism classification}, we mean characterizing all homeomorphism types of the spaces which admit such $G$-actions.  
 By \emph{equivariant classification}, we mean determining all possible actions of $G$ in terms of ``enough data''. This means being able to decide when two sets of data of the actions on two spaces  give rise to an equivariant homeomorphism between them. 

The study of $\soso$-actions on closed (i.e. compact and without boundary) $4$-manifolds was initiated by the work of Orlik and Raymond  in \cite{OrlRayI}, where they obtained  an equivariant classification for such actions on orientable manifolds under the added condition that the action  does not have  exceptional orbits. In particular, they showed that the smooth classification is equivalent to the topological classification, i.e. a continuous effective action on a closed topological $4$-manifold is equivariantly homeomorphic to a smooth effective action on a closed smooth $4$-manifold.  
 In a follow-up paper \cite{OrlRayII}, Orlik and Raymond extended the equivariant classification for the case when the action does admit exceptional orbits.  The classification of the homeomorphism type of smooth closed oriented $4$-manifolds with an effective $\soso$-action was carried out in \cites{OrlRayI,OrlRayII,Pao1,Pao2}. We provide this classification in Section~\ref{SS: Torus actions on $4$-manifolds}. 
For the non-orientable case, both the  equiviariant classifcation problem and the homeomorphism type classification problem  were addressed by Kim in \cite{Kim}. 
 
Alexandrov spaces (with curvature bounded form below) are complete length spaces with a lower curvature bound in the sense of triangle comparison. By Toponogov's Theorem, they generalize complete Riemannian manifolds with an uniform lower curvature bound.  Riemannian orbifolds (with a lower sectional curvature bound), orbit spaces of isometric  actions of compact Lie groups on Riemannian manifolds with sectional curvature bounded from below, and Gromov--Hausdorff limits of sequences of $n$-dimensional Riemannian manifolds with a uniform lower bound on their sectional curvature are examples of Alexandrov spaces.

Two-dimensional Alexandrov spaces are topological two-manifolds, possibly with boundary (see \cite{BurBurIva}*{Corollary 10.10.3}). Closed  three-dimensional Alexandrov spaces are either topological three-manifolds, or are homeomorphic to quotients of smooth three-manifolds by orientation reversing involutions with isolated fixed points (see \cite{GalGui} and cf. \cite{HaSe}). In higher dimensions though, similar general results do not exist. Considering Alexandrov spaces with a  large amount of \emph{symmetry} provides a systematic way of studying them.  This framework has been successfully used in the smooth category to study, for instance, Riemannian manifolds with  prescribed lower curvature conditions, such as positive Ricci or positive sectional curvature (see for example \cites{GrSea,GVZ,GWZ,GZ00,GZ02,Wil}). In particular,  many authors studied such spaces in the presence of a circle or a torus action either in low dimensions or in general    (see for example \cites{GaKe,GalSea11,GrSea94,GW,HarSea19,HsKl,Kle,SeaY}).

Since the group of isometries of a compact Alexandrov space is a compact Lie group \cite{FukYam}, one may assume the existence of an isometric compact Lie group action and consider  such spaces in analogy with the Riemannian case.
This allows us to bring  the theory of compact transformation groups into the context of Alexandrov spaces (see for example \cites{AZ, GalGui13, GalNun, GalSea, GalZar2,HaSe, HarSea, Nun, Yeroshkin}).

In particular, there exists a (non-unique) maximal dimension torus acting by isometries on $X$. When the space has curvature bounded below by $1$,  Harvey and Searle showed in \cite{HaSe} that the dimension of any maximal torus contained in the isometry group is bounded above by $\lfloor (n+1)/2\rfloor$. In particular, for a compact $4$-dimensional Alexandrov space with curvature bounded below by $1$, if we have  an effective $T^k$-action by isometries, then $k\leqslant 2$. 

In the present article we study the equivariant classification of effective, isometric $\soso$-actions on closed, orientable Alexandrov $4$-spaces.  We show that the techniques used in the study of smooth effective $\soso$-actions on smooth $4$-manifolds extend to this metric setting. Furthermore, this family  of  Alexandrov spaces contains a subfamily of spaces having maximal Abelian symmetry (see \cite{HaSe}).

Our main results are the following: Given a closed, orientable Alexandrov $4$-space $X$ with an effective, isometric $\soso$-action, first we find a set of invariants classifying $X$ up to equivariant homeomorphism. More precisely, we show that given  another closed orientable Alexandrov $4$-space $Y$ with an effective, isometric $\soso$-action, if $Y$ has the same set of invariants as $X$, then $Y$ is equivariantly homeomorphic to $X$. 
\begin{theorem}
\label{thm:invariants_introduction}
Let $X$ be a closed, orientable Alexandrov $4$-space admitting an effective, isometric $\soso$-action. Then the set of inequivalent (up to equivariant homeomorphism) effective, isometric $\soso$-actions on $X$ is in one-to-one correspondence with the set of unordered tuples of isotropic and topological invariants
\begin{linenomath}
\begin{equation*}
\big\{(b_1,b_2);\varepsilon;g; \big\{ \left\langle p_i,q_i\right\rangle \big\}_{i=1}^s; \big\{ \big( (a_{\ell},b_{\ell}),f_{\ell}\big) \big\}_{\ell=1}^{t} ; \big\{ (\alpha_j;\gamma_{j,1},\gamma_{j,2}) \big\}_{j=1}^k \big\}.
\end{equation*}
\end{linenomath}

\end{theorem}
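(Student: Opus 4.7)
The plan is to adapt the orbit-space analysis of Orlik and Raymond to the Alexandrov setting, using the Slice Theorem and the Isotropy Lemma referenced above to control the local structure of the $\soso$-action. The starting point is to identify the orbit space $X^{\ast}=X/\soso$ as a $2$-dimensional topological space of a very constrained form: its interior is a surface, its boundary corresponds to orbits with one-dimensional isotropy, corner points on the boundary correspond to fixed points, isolated interior points correspond to exceptional (finite isotropy) orbits, and further isolated interior points correspond to the topologically singular orbits specific to the Alexandrov category. Once this stratification of $X^{\ast}$ is in place, the invariants are read off directly as combinatorial data on $X^{\ast}$: the underlying surface yields the genus $g$ and orientation datum $\varepsilon$; the weights $\langle p_i,q_i\rangle$ parametrize the slice representations at exceptional orbits; the pairs $(a_{l,w},b_{l,w})$ together with the gluing labels $f_{l,w}$ describe how arcs of circle isotropy meet at fixed points; the tuples $(\alpha_j;\gamma_{j,1},\gamma_{j,2})$ encode the isotropy and cone structure at the topologically singular orbits; and $(b_1,b_2)$ are the Euler-number-type obstructions to extending a section of the principal-orbit bundle over the remaining base.

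The forward direction is straightforward: an equivariant homeomorphism preserves each of the labeled strata and the local slice data, so the tuple is a bona fide invariant. The content lies in the converse. Given two actions on $X$ and $Y$ with identical invariant tuples, I would first fix a stratum-respecting homeomorphism $\varphi\colon X^{\ast}\to Y^{\ast}$ of orbit spaces. Over each singular stratum the Slice Theorem identifies an equivariant tubular neighborhood with a standard model depending only on the isotropy data in the tuple, so a partial equivariant homeomorphism $F$ can be built over these neighborhoods in a canonical way. Next I would extend $F$ across the one-dimensional isotropy arcs of $\partial X^{\ast}$, where the local model is a product and the gluing is controlled by the $(a_{l,w},b_{l,w},f_{l,w})$ data. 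What remains is the complement, a principal $\soso$-bundle over a punctured surface with boundary, whose equivalence class is determined by two Euler-class invariants; these are precisely $b_1$ and $b_2$, so their equality permits an extension of $F$ compatible with the already-constructed boundary values.

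The main obstacle I expect is controlling the construction at the topologically singular orbits, which have no analogue in the Orlik--Raymond setting. Here the slice is a cone on a spherical space form rather than a linear disk, and one must verify that the tuple $(\alpha_j;\gamma_{j,1},\gamma_{j,2})$ genuinely determines the equivariant homeomorphism type of the tubular neighborhood, and that the gluing of these neighborhoods to the surrounding principal stratum can be performed consistently with the rest of the construction. A secondary technical point is the bookkeeping needed to show that the correspondence lands in \emph{unordered} tuples: one must check that arbitrary permutations of the singular orbits, together with the sign and normalization ambiguities of the weights, can all be realized by self-equivalences of the standard models, so that different orderings of the same data really do yield equivariantly homeomorphic actions.
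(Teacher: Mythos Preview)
Your proposal contains a substantive misreading of the orbit-space stratification, and this propagates into a misidentification of the invariants. In the paper (Proposition~\ref{P: S_X is finite collection of circle and points} and Subsection~\ref{Sub: Local structure of X^* around points in S_X^0}), the topologically singular points of $X$ are shown to be \emph{fixed points} of the $\soso$-action whose space of directions is a lens space $L(q,p)$; in the orbit space they sit as corner points on $\partial X^{\ast}$, exactly like the topologically regular fixed points, not as isolated interior points. Accordingly, the invariants are assigned differently from what you describe: the weights $\langle p_i,q_i\rangle$ label those boundary circles of $X^{\ast}$ consisting \emph{only} of circular orbits; the Seifert data $(\alpha_j;\gamma_{j,1},\gamma_{j,2})$ belong to the exceptional (finite-isotropy) orbits in the interior; and the singular fixed points are recorded entirely within the boundary data $\{(a_{l,w},b_{l,w}),f_{l,w}\}$, namely as those corners where the determinant $f_{l,w}=a_{l,w}b_{l,w+1}-a_{l,w+1}b_{l,w}$ is different from $0,\pm 1$. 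Your ``main obstacle'' paragraph is therefore aimed at the wrong target: there is no new class of isolated interior cone points to worry about, but you do have to justify that the cross-section/extension arguments at boundary corners go through when the adjacent circle isotropies have determinant $\neq\pm 1$.

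On the method itself: the paper does not assemble the equivariant homeomorphism from local slice models as you outline, but rather proves the existence of a cross-section to $\pi\colon X\to X^{\ast}$ away from the exceptional orbits (Proposition~\ref{P: X^* a disk then there is a cross-section} and Theorem~\ref{T:Equivalent iff orbit spaces are isomorphic}), by observing that Orlik--Raymond's extension lemmas (Lemmas~\ref{L: lemma 1.6 Orlik} and~\ref{L: lemma 1.8 Orlik}) are stated for Hausdorff $\soso$-spaces and do not actually use the condition $mq-np=\pm 1$ at the corner. Once cross-sections exist on both sides, an isomorphism of weighted orbit spaces lifts to an equivariant homeomorphism, with the exceptional-orbit tubes reattached uniquely. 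Your slice-by-slice strategy could in principle be made to work once the stratification is corrected, but you would still need something equivalent to the cross-section argument to match the principal strata compatibly with the already-fixed boundary pieces; the Euler-class matching via $(b_1,b_2)$ alone does not control the relative framing along $\partial X^{\ast}$.
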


We give the precise definitions and value constraints of the invariants stated in Theorem~\ref{thm:invariants_introduction} in Section \ref{S_Equivariant classification}. 

Recall that a \textit{topologically regular} Alexandrov space is an Alexandrov space with every \emph{tangent cone} homeomorphic to an Euclidean space.
In particular we recover the following result of Galaz-Garc\'ia \cite{Gal} for any topologically regular Alexandrov $4$-space, when comparing with the equivariant classification of effective $\soso$-actions on orientable $4$-manifolds.

\begin{corollary}
Let $X$ be a closed, orientable and topologically regular Alexandrov $4$-space with an effective, isometric $\soso$-action. Then $X$ is equivariantly homeomorphic to a smooth $4$-manifold and the $\soso$-action is equivalent to a smooth action.
\end{corollary}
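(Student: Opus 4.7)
The plan is to apply Theorem~\ref{thm:invariants_introduction} and then use the topological regularity to rule out the genuinely ``Alexandrov'' invariants in the tuple, so that what remains is an Orlik--Raymond invariant set for a smooth $T^2$-manifold.

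First I would unpack the list of invariants attached to $X$ by Theorem~\ref{thm:invariants_introduction} and separate them into two groups. One group should consist of the invariants that also appear in the Orlik--Raymond classification for smooth $T^2$-actions on orientable $4$-manifolds: the Euler numbers $(b_1,b_2)$, the orientation datum $\varepsilon$, the genus $g$ of the orbit space, the weighted circle-isotropy invariants $\langle p_i,q_i\rangle$, and the fixed-point data $(\alpha_j;\gamma_{j,1},\gamma_{j,2})$. The other group should consist of invariants that arise exclusively from isolated points of $X$ at which the slice representation is non-smooth, i.e.\ points whose tangent cone is a cone over a spherical space form that is not a standard sphere; these are precisely the $\{(a_{l,w},b_{l,w}),f_{l,w}\}$ data. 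The identification of this second group with non-manifold singular points is the crucial input and should follow from the slice theorem and the local structure of $T^2$-actions near orbits used in the proof of Theorem~\ref{thm:invariants_introduction}.

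Next I would invoke the hypothesis that $X$ is topologically regular. By definition every tangent cone of $X$ is homeomorphic to a Euclidean space, so $X$ has no points whose local model is a non-trivial cone over a spherical space form distinct from a sphere. Combined with the identification in the previous paragraph, this forces the index set $(t,r_l)$ to be empty, so the invariant tuple of $X$ reduces to
\begin{equation*}
\bigl\{(b_1,b_2);\varepsilon;g;\{\langle p_i,q_i\rangle\}_{i=1}^{s};\{(\alpha_j;\gamma_{j,1},\gamma_{j,2})\}_{j=1}^{k}\bigr\},
\end{equation*}
which is exactly the form of a legal Orlik--Raymond tuple for an effective smooth $T^2$-action on a closed, orientable smooth $4$-manifold.

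Finally, by the realization part of the Orlik--Raymond theorem, there exists a closed, orientable smooth $4$-manifold $M$ together with an effective smooth $T^2$-action whose set of invariants coincides with the one above. Since $M$ is in particular an orientable Alexandrov $4$-space (with any invariant smooth metric), applying the uniqueness part of Theorem~\ref{thm:invariants_introduction} to the pair $(X,M)$ yields an equivariant homeomorphism $X\to M$, under which the $T^2$-action on $X$ corresponds to the smooth $T^2$-action on $M$. The main obstacle is really the bookkeeping step: verifying that the invariants labeled $\{(a_{l,w},b_{l,w}),f_{l,w}\}$ are in bijection with the topologically singular orbits, so that topological regularity indeed kills exactly this piece of the data and no other.
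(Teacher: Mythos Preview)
Your overall strategy---reduce the invariant tuple to an Orlik--Raymond tuple and then invoke realization plus uniqueness---is exactly the paper's route. The gap is in the bookkeeping step, which you yourself flagged as the main obstacle, and it is a real one.

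You have misread what the two families of invariants encode. The tuple $(\alpha_j;\gamma_{j,1},\gamma_{j,2})$ is \emph{not} fixed-point data; it is the oriented Seifert invariant attached to the $j$-th exceptional orbit (finite, nontrivial isotropy) in the interior of $X^*$. Conversely, the family $\{(a_{\ell,w},b_{\ell,w}),f_{\ell,w}\}$ is \emph{not} attached solely to topologically singular points. It records, for each boundary component of $X^*$ that contains fixed points, the circular isotropies $G(a_{\ell,w},b_{\ell,w})$ on the arcs between consecutive fixed points together with the determinants $f_{\ell,w}=a_{\ell,w}b_{\ell,w+1}-a_{\ell,w+1}b_{\ell,w}$. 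A fixed point is in $RF$ precisely when the adjacent determinant is $\pm 1$, and in $SF$ precisely when it is $\neq 0,\pm 1$. So topological regularity (which is equivalent to $SF=\emptyset$) does \emph{not} force $t=0$; it only forces every $f_{\ell,w}$ to equal $\pm 1$. Your claimed reduced tuple, with the entire $\{(a_{\ell,w},b_{\ell,w}),f_{\ell,w}\}$ block deleted, would correspond to an action with no fixed points at all, which is far too restrictive.

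The correct reduction is: under topological regularity, all $f_{\ell,w}=\pm 1$, and the paper notes explicitly (in the remarks following the invariant list in Section~\ref{S_Equivariant classification}) that in this case the $f_{\ell,w}$ become superfluous and the tuple collapses to an Orlik--Raymond tuple. From there your last paragraph goes through verbatim: realize the tuple by a smooth closed orientable $4$-manifold with a smooth effective $\soso$-action via Orlik--Raymond, and apply the uniqueness direction of Theorem~\ref{thm:invariants_introduction}.
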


Second, we show that the set of invariants in Theorem \ref{thm:invariants_introduction} is complete, that is, for each ``legal'' set of invariants we show that there exists a closed, orientable Alexandrov $4$-space $X$ with an effective, isometric $\soso$-action, such that it has the given set of invariants. In fact, we show that it is possible to realize such an Alexandrov $4$-space as a closed Riemannian $4$-orbifold, thus obtaining the following result. 

\begin{theorem}\label{thm: Alex 4 space with T^2 action is Orbifold, introduction}
Let $X$ be a closed, orientable Alexandrov $4$-space admitting an effective, isometric $T^2$-action. Then $X$ is equivariantly homeomorphic to the underlying topological space of a closed $4$-orbifold.
\end{theorem}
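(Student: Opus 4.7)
The plan is to leverage Theorem~\ref{thm:invariants_introduction} and realize, for each legal tuple of invariants, an explicit closed $4$-orbifold $\mathcal{O}$ endowed with an effective isometric $T^2$-action producing exactly that tuple. Since $X$ is equivariantly determined by its invariants, exhibiting such an $\mathcal{O}$ gives an equivariant homeomorphism $|\mathcal{O}|\cong X$ and proves the statement.

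First, I would enumerate the local equivariant geometry of $X$ around each orbit using the Slice Theorem for isometric torus actions on Alexandrov spaces: the orbit types are principal, finite cyclic isotropy, circle isotropy, topologically regular fixed point, and topologically singular orbit. In the first four cases the slice is Euclidean and the local model $T^2\times_{T^2_x}S_x$ is a smooth $4$-manifold piece carrying a standard linear $T^2$-action, exactly as in the Orlik--Raymond framework for smooth $4$-manifolds. These pieces admit trivial orbifold structures. The new ingredient is the topologically singular orbit, corresponding to the invariants $(\alpha_j;\gamma_{j,1},\gamma_{j,2})$.

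For a topologically singular orbit, I would show that the point is isolated and $T^2$-fixed, and that its space of directions is a positively curved $3$-dimensional Alexandrov space admitting an isometric $T^2$-action; by the classification of such spaces (as developed by Galaz-Garc\'ia, N\'u\~nez-Zimbr\'on and collaborators) the space of directions is a spherical space form $S^3/\Gamma_j$ with $\Gamma_j$ a finite cyclic subgroup of $\SO(4)$ commuting with a linear $T^2\subset\SO(4)$. The tangent cone is therefore $\mathbb{R}^4/\Gamma_j$, which furnishes a canonical $T^2$-equivariant orbifold chart whose local group and weights are read off from $(\alpha_j;\gamma_{j,1},\gamma_{j,2})$.

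With all local models in hand, the global orbifold $\mathcal{O}$ is assembled by the Orlik--Raymond cut-and-paste procedure applied to the decorated orbit space $X^{*}$ (a $2$-manifold, possibly with boundary corresponding to circle-isotropy arcs and corners corresponding to fixed points), inserting the cones $\mathbb{R}^4/\Gamma_j$ at the isolated topologically singular orbits; gluing compatibility follows from the same Seifert-type invariants $(b_1,b_2), \{\langle p_i,q_i\rangle\}, \{(a_{l,w},b_{l,w}),f_{l,w}\}$ as in the manifold case. A $T^2$-invariant Riemannian orbifold metric is then obtained by averaging local invariant metrics via a $T^2$-invariant partition of unity subordinate to the chart cover. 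The main obstacle is the step identifying the local model at a topologically singular orbit: one must show both that the only new Alexandrov singularities are isolated cyclic quotients of $\mathbb{R}^4$ and that the $T^2$-action on this cone is conjugate to a standard linear one, so that the orbifold data matches what the classification prescribes.
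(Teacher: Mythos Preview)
Your overall strategy---use Theorem~\ref{thm:invariants_introduction} and then realize each legal invariant tuple by an explicit closed $T^2$-orbifold---is exactly the route the paper takes. However, you have misread the invariant system. The tuples $(\alpha_j;\gamma_{j,1},\gamma_{j,2})$ record the \emph{exceptional} orbits: isolated \emph{interior} orbits with nontrivial finite cyclic isotropy, which are topologically \emph{regular} (space of directions $S^3$). The topologically singular points are instead $T^2$-fixed points lying on the \emph{boundary} of $X^*$, detected by the determinants $f_{\ell,w}=a_{\ell,w}b_{\ell,w+1}-a_{\ell,w+1}b_{\ell,w}$: the corner is an $RF$-point when $f_{\ell,w}=\pm1$ and an $SF$-point, with space of directions the lens space determined by the adjacent circle isotropies, precisely when $f_{\ell,w}\neq 0,\pm1$. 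So the orbifold cones $\mathbb{R}^4/\Gamma$ must be inserted at boundary corners with $|f_{\ell,w}|\geq 2$, not at the interior locations governed by $(\alpha_j;\gamma_{j,1},\gamma_{j,2})$. With this mislabeling your assembly would place the singular charts in the wrong stratum of $X^*$ and would not reproduce the prescribed invariants.

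Once this is corrected, your local analysis of the singular model (isolated $T^2$-fixed point, lens-space space of directions, cyclic local group commuting with a linear $T^2$) is right, and the global construction goes through. The paper carries out the assembly not by an ad~hoc Orlik--Raymond cut-and-paste but by invoking the Haefliger--Salem criterion \cite{HaSa} for building torus orbifolds from a weighted orbit space: one covers $X^*$ by the Orlik--Raymond manifold piece over the complement of annular neighborhoods of the $SF$-carrying boundary components, together with explicit open cones $K(L(r,s))$ over lens spaces near each singular corner, and verifies the equivariant compatibility on overlaps. This packages the gluing cleanly and outputs a smooth $4$-orbifold with smooth $T^2$-action; an invariant Riemannian orbifold metric then furnishes the Alexandrov structure, and Theorem~\ref{thm:invariants_introduction} gives the equivariant homeomorphism with $X$.
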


Theorem~\ref{thm: Alex 4 space with T^2 action is Orbifold, introduction} strengthens the results of \cite[Theorem~D]{HaSe} in dimension $4$, where the authors show that  positively curved Alexandrov $4$-spaces with $\soso$-action are equivariantly homeomorphic to $4$-dimenional $\soso$-orbifolds. In fact, our theorem shows that this is always the case, regardless of who is the lower curvature bound on the Alexandrov space.


Due to Theorem ~\ref{thm: Alex 4 space with T^2 action is Orbifold, introduction}, the homeomorphism classification for $4$-dimensional Alexandrov spaces with an effective isometric $\soso$-action is contained in the homeomorphism classification of orbifolds with an effective $\soso$-action. Such orbifolds were studied by Haefliger and Salem in \cite{HaSa}, but  they did not consider the homeomorphism classification. We recall that a $2k$-orbifold with an effective $T^k$-action is called a torus orbifold. These orbifolds were studied in \cites{GaKeRaWe,Wie}, for the case when the orbifold is simply-connected and rationally elliptic. A simply-connected topological space $X$ is called rationally elliptic if it satisfies $\dime_\QQ(H^*(X,\QQ))<\infty$ and $\dime_\QQ(\pi_\ast(X)\otimes\QQ)<\infty$. 

Regarding the homeomorphism type of closed, orientable Alexandrov $4$-spaces with effective and isometric $\soso$-actions we provide a basic topological recognition result. More precisely we show that any closed, orientable Alexandrov $4$-space with an effective, isometric $\soso$-action can be decomposed as an ``equivariant connected sum'' along subspaces homeomorphic to $S^2\times S^1$, where one of the pieces is an Orlik-Raymond $4$-manifold and the remaining pieces are ``simple'' Alexandrov $4$-spaces. We refer the reader to Section \ref{S:BASIC_TOPOLOGICAL_RECOGNITION} for the details. From this basic decomposition it follows that the topological classification could be answered, by identifying the homeomorphism type of these ``simple" Alexandrov pieces. The identification of such ``simple" pieces is beyond the scope of the present work. 

\begin{rem}
The proof of Theorem~\ref{thm:invariants_introduction} is mainly topological. The synthetic curvature condition is used implicitly to describe small balls around points (see Proposition~\ref{P: Normal space of directions}), and to study the local isotropy action (see Theorem~\ref{THM:SLICE}). Moreover, for Theorem~\ref{thm: Alex 4 space with T^2 action is Orbifold, introduction} we explicitly use the characterization of the spaces of directions.
\end{rem}

\vspace*{1em}
The organization of the article is as follows. In Section \ref{S:PRELIM} we recall the structural properties of closed Alexandrov $4$-spaces and the basic aspects and different notions of orientability of Alexandrov spaces. We also recall the main features of the theory of transformation groups on Alexandrov spaces. We conclude the section with a short summary of the classification of $\soso$-actions on $3$ and $4$-dimensional manifolds. In Section \ref{S:Orbit space and Orbit types} we describe the set of topologically singular points. In Section \ref{S:Local structure around topological regular points} we describe the local structure   of the orbit space of a closed, orientable Alexandrov $4$-space with an effective, isometric $\soso$-action, around the different orbit types. We complete the analysis of Section \ref{S:Local structure around topological regular points} by providing the topological structure and distribution of the different orbits in the orbit space. In Section \ref{S_Equivariant classification} we define the isotropy and topological invariants appearing in Theorem \ref{thm:invariants_introduction}, as well as the proof of this result. In Section \ref{S:Construction_Alexandrov_metric} we show the completeness of the invariants in Theorem \ref{thm:invariants_introduction} by constructing for every given set of invariants a closed, orientable Alexandrov $4$-space with a $\soso$-action. Finally in Section \ref{S:BASIC_TOPOLOGICAL_RECOGNITION} we define the equivariant gluings along $S^2\times S^1$ and prove the aforementioned topological recognition result.





\section{Preliminaries}
\label{S:PRELIM}
 
\subsection{Four-dimensional Alexandrov spaces}

We give a brief account of the main structural properties of closed four-dimensional Alexandrov spaces. For simplicity, throughout the article we refer to four-dimensional Alexandrov spaces as Alexandrov $4$-spaces. We refer the reader to \cites{ BurBurIva,BurGroPer} for a general introduction to the theory. 

Let $X$ be a closed (i.e. compact and without boundary) Alexandrov $4$-space. The space of directions $\Sigma_xX$ at each point $x\in X$ is a closed Alexandrov $3$-space with $\curv\geq 1$. It is a consequence of the classification of closed positively curved Alexandrov $3$-spaces (\cite[Theorem 1.1]{GalGui}) that the possible spaces of directions of a closed Alexandrov $4$-space are homeomorphic to $\mathbb{S}^3$, $\susp(\RR P^2)$ or a closed spherical $3$-manifold with non-trivial fundamental group (see \cite[Corollary 2.3]{GalGui}, see also \cite[Proposition~3.6 and Corollary~3.7]{HaSe}). Namely 

\begin{prop}[\protect{\cite[Corollary~2.3]{GalGui}}]\label{Prop:Space of dierections of 4Alex}
The space of directions of a $4$-dimensional Alexandrov space without boundary is homeomorphic to $\susp(\RR P^2)$ or to a spherical three-manifold. 
\end{prop}

A point whose space of directions is homeomorphic to $\mathbb{S}^3$ is called \textit{topologically regular}. Otherwise, the point is said to be \textit{topologically singular}. The set of topologically regular points of $X$ is an open and dense subset of $X$. 
The local structure of $X$ is determined by Perelman's Conical Neighborhood Theorem \cite{Per2} which states that every point $x\in X$ has a neighborhood pointed-homeomorphic to the cone over $\Sigma_xX$.

\subsection{Orientable Alexandrov spaces}
In this section we recall the notion of orientability for Alexandrov spaces (see \cite{Mitsuishi} for more details).

An Alexandrov space is called \textit{locally orientable} if  every point has an orientable neighborhood and \textit{locally non-orientable} otherwise. 
Equivalently, we say that $X$ is locally orientable if the space of directions at each point is orientable. That is  $H^n(X,X\smallsetminus\{p\}, \ZZ) \cong H^{n-1}(\Sigma_p X;\ZZ)\cong \ZZ$. Otherwise $X$ is locally non-orientable (\cite[Page~124]{Pet}).

Now we recall the global definition of orientability, following \cite{Mitsuishi}:
 
\begin{thm}[\protect{\cite[Theorem~1.8]{Mitsuishi},
}]\label{T_Orientability}
Let $X$ be a compact $n$-dimensional Alexandrov space. Then
 the following conditions are equivalent:
\begin{enumerate}[(a)]
\item\label{T__Orientability (a)}The manifold part $X_{top}$ of $X$ is orientable as an $n$-manifold;
\item\label{T__Orientability (b)}  $H_n(X; \Z)\cong \Z$;
\item\label{T__Orientability (c)}  $H^n(X; \Z)\cong \Z$ and the canonical morphism $H^n(X, X\smallsetminus\{x\}; \Z)\to H^n(X; \Z) $ is an isomorphism for any $x\in X$. 
\end{enumerate}
\end{thm}

\begin{definition}\cite[Definition~1.10]{Mitsuishi}
A compact Alexandrov space is called orientable if it satisfies one of the
conditions \eqref{T__Orientability (a)}--\eqref{T__Orientability (c)} listed in Theorem \ref{T_Orientability}.
\end{definition}

\begin{rem}
For a compact Alexandrov space of dimension at least $2$, the equivalence between items \eqref{T__Orientability (a)} and \eqref{T__Orientability (c)} in Theorem~\ref{T_Orientability}  it is also proven by Lemma~3.3 in \cite{HaSe}.
\end{rem}

\begin{rem}\label{R: remark orientability}
It follows directly from Proposition \ref{Prop:Space of dierections of 4Alex} that a closed Alexandrov $4$-space is locally orientable if and only if no space of directions is homeomorphic to $\susp(\mathbb{R}P^2)$. 
\end{rem}

\subsection{Group actions}
Let $X$ be a finite-dimensional Alexandrov space. Fukaya and Yamaguchi showed in \cite{FukYam} that, as in the Riemannian case, the isometry group of
$X$ is a Lie group. Moreover, if $X$ is compact then its isometry group is also compact. We consider isometric actions $G\times X \rightarrow X$ of a compact Lie group $G$ on $X$. The orbit of a point $x\in X$ is denoted by $G(x) \cong G/G_x$. Here, $G_x= \{ g\in G \ : \ gx= x	  \}$ is the isotropy subgroup of $x$ in $G$. The \textit{ineffective kernel} of the action is defined as the closed subgroup of $G$ given by $\cap_{x\in X} G_x$. If the ineffective kernel is trivial, we say that the action is \textit{effective}. In this article we assume all actions considered to be effective unless stated otherwise. Given a subset $A\subset X$ we denote its image under the canonical projection $\pi: X\rightarrow X/G$ by $A^*$. In particular we denote the orbit space by $X^*=X/G$.

The following generalization of the Principal Orbit Theorem for Alexandrov spaces was obtained in \cite[Theorem 2.2]{GalGui13}.

\begin{thm}[Principal Orbit Theorem]
\label{THM:Principal-orbit-theorem}
Let $G$ be a compact Lie group acting isometrically on an $n$-dimensional Alexandrov space $X$. Then there is a unique maximal orbit type and the orbits with maximal orbit type, the so-called principal orbits, form an open and dense subset of $X$.
\end{thm}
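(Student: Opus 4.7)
The plan is to mimic the classical proof of the Principal Orbit Theorem from the smooth Riemannian setting, which rests on two pillars: a slice theorem describing tubular neighborhoods of orbits, and a Baire category argument. The main work is to upgrade Perelman's Conical Neighborhood Theorem to an equivariant statement for the isometric $G$-action, everything else is then largely formal.

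First I would establish a slice description. For any $x\in X$, since $G$ acts by isometries, $G_x$ acts isometrically on the space of directions $\Sigma_x X$. Using Perelman's Conical Neighborhood Theorem, combined with the equivariance of the gradient exponential map coming from the isometric action, one obtains for sufficiently small $\varepsilon>0$ a $G_x$-equivariant pointed homeomorphism between a neighborhood of $x$ and an $\varepsilon$-cone $C_\varepsilon(\Sigma_x X)$ (the \emph{normal slice} $S_x$). Spreading this along the orbit via the $G$-action then yields a $G$-invariant tubular neighborhood of $G(x)$ which is $G$-equivariantly homeomorphic to the associated twisted product $G\times_{G_x} S_x$.

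Second, the slice description has two immediate consequences. (i) Upper semicontinuity of isotropy: for every $y$ in the tubular neighborhood of $G(x)$, the isotropy $G_y$ is conjugate in $G$ to a subgroup of $G_x$, so the orbit types are partially ordered by subconjugacy with $G(x)$ being a locally maximal element. (ii) Countability of orbit types: proceed by induction on $\dime X$. The base case is trivial, and for the inductive step the slice theorem reduces the counting of isotropy conjugacy classes appearing near $x$ to the counting of orbit types for the $G_x$-action on $\Sigma_x X$, which has strictly smaller dimension; together with compactness of $X$ this yields at most countably many orbit types globally.

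Third, for each isotropy type $(H)$ the orbit-type stratum $X_{(H)}:=\{y\in X:\, G_y \text{ is conjugate to }H\}$ is locally closed by (i), and by (ii) only countably many such strata exist. A point $x\in X$ fails to lie on a maximal orbit precisely when $x\in \overline{X_{(H)}}\setminus X_{(H)}$ for some strictly smaller stratum, so the complement of the maximal-orbit set is a countable union of closed sets each with empty interior. The Baire Category Theorem (applicable since $X$ is locally compact and complete) yields that the set of points whose orbit has locally maximal type is open and dense. Finally, connectedness of $X$ combined with local constancy of the isotropy conjugacy class on the maximal stratum forces a single globally maximal type, giving uniqueness. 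The main obstacle is the slice theorem itself, since establishing the equivariant conical structure in a genuinely non-smooth setting requires care; however, the gradient-flow machinery on Alexandrov spaces provides exactly the $G$-equivariant deformation retraction needed.
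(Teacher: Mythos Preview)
The paper does not give its own proof of this statement: it is quoted in the preliminaries as \cite[Theorem~2.2]{GalGui13} and used as a black box, just as the Slice Theorem (Theorem~\ref{THM:SLICE}) is quoted without proof immediately afterwards. So there is no argument in the paper to compare your proposal against.

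That said, your outline is essentially the strategy carried out in the cited reference and in the classical smooth case: an equivariant slice theorem gives upper semicontinuity of isotropy, induction on dimension bounds the number of orbit types, and a Baire/connectedness argument yields a unique principal type whose stratum is open and dense. Two comments on your sketch. First, the equivariant slice theorem in the Alexandrov setting is a substantial result in its own right (it is what the paper records as Theorem~\ref{THM:SLICE}, attributed to \cite{HaSe}); your one-paragraph derivation via Perelman's conical neighborhood and gradient flow hides most of the work, in particular the passage from a $G_x$-equivariant cone at $x$ to a $G$-equivariant tube $G\times_{G_x} K(S_x^\perp)$, which requires more than ``spreading along the orbit''. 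Second, your Baire step is phrased somewhat loosely: the claim that each $\overline{X_{(H)}}\setminus X_{(H)}$ has empty interior is what actually needs to be argued (it follows from the slice theorem and the fact that the $G_x$-action on the slice has strictly fewer orbit types by induction), and it is cleaner to run the argument by showing the set of points whose orbit type is locally constant is open and dense, then invoke connectedness. With those two points fleshed out, your approach matches the literature.
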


The following result stated in \cite[Proposition~4]{GalSea} gives a description of the tangent and normal spaces to the orbits of a Lie group action on an Alexandrov space.

\begin{prop}\label{P: Normal space of directions}
Let $X$ be an Alexandrov space admitting an isometric $G$-action and fix $x\in X$ with $\dim G/G_x > 0$. If $S_x \subset \Sigma_xX$ is the unit tangent
space to the orbit $G(x) = G/G_x$, and 
\[
S_x^{\perp} =\{w\in \Sigma_xX\mid \angle (v, w)=\pi/2 \, \,\text{for all} \,\,v \in S_x    \},
\]
then the following hold:
\begin{itemize}
\item[(1)] The set $S^{\perp}_x$ is a compact, totally geodesic Alexandrov subspace of $\Sigma_xX$ with curvature bounded below by $1$, and the space of directions $\Sigma_xX$ is isometric to the join $S_x\ast S^{\perp}_x$ with the standard join metric.
\item[(2)] Either $S^{\perp}_x$ is connected or it contains exactly two points at distance $\pi$.
\end{itemize}
\end{prop}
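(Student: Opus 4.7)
The plan is to first identify $S_x$ with a round sphere by smoothness of the orbit, then invoke spherical rigidity in Alexandrov geometry to obtain the join decomposition of $\Sigma_xX$, and finally handle part (2) by a dimension argument on $S_x^\perp$.

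First, since the orbit $G(x) = G/G_x$ is a smooth homogeneous Riemannian manifold of positive dimension when endowed with the induced length metric from $X$, its tangent space at $x$ is Euclidean and the unit tangent sphere $S_x \subset \Sigma_xX$ is a standard round sphere of dimension $\dim G/G_x - 1$, isometrically embedded in $\Sigma_xX$; in particular $S_x$ is itself an Alexandrov space with $\curv \equiv 1$. Moreover, $S_x^\perp$ is the intersection of the closed sets $\{w \in \Sigma_xX : \angle(v,w) = \pi/2\}$ as $v$ ranges over $S_x$, hence it is closed in the compact space $\Sigma_xX$ and therefore compact.

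For the join decomposition in (1), I would invoke the spherical rigidity principle for Alexandrov spaces with $\curv \geq 1$ containing an isometrically embedded round sphere. For every $\xi \in \Sigma_xX \setminus S_x^\perp$, Toponogov comparison applied to triangles based at $\xi$ with opposite side in $S_x$ shows that the distance function $d(\xi, \cdot)|_{S_x}$ attains a unique minimum at some foot of perpendicular $v_\xi \in S_x$, and the geodesic from $v_\xi$ through $\xi$, extended to length $\pi/2$, terminates in a point of $S_x^\perp$. Assembling these facts yields a distance-preserving bijection from the metric join $S_x * S_x^\perp$ onto $\Sigma_xX$, which is therefore an isometry. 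The totally geodesic character of $S_x^\perp$ then follows from the join description, since any geodesic in $\Sigma_xX$ joining two points of $S_x^\perp$ must lie entirely in the level set $\{d(\cdot, S_x) = \pi/2\} = S_x^\perp$. As a totally geodesic subspace of an Alexandrov space with $\curv \geq 1$, $S_x^\perp$ inherits the lower curvature bound.

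For part (2), I would split by the dimension of $S_x^\perp$. If $\dim S_x^\perp \geq 1$, then $S_x^\perp$, with its induced length metric, is a positive-dimensional Alexandrov space and hence is path-connected. If $\dim S_x^\perp = 0$, then $S_x^\perp$ is a finite discrete set, and the join $S_x * S_x^\perp$ consists of $|S_x^\perp|$ copies of the suspension $\susp(S_x)$ glued along their common equator $S_x$. In order for this to be isometric to the Alexandrov space $\Sigma_xX$, one necessarily has $|S_x^\perp| \leq 2$: with three or more points, one could produce tangent configurations at any $v \in S_x$ whose angles violate the local comparison inequality. If $|S_x^\perp| = 1$ the set is trivially connected, while if $|S_x^\perp| = 2$ the two points sit at the poles of $\susp(S_x)$ and are at distance exactly $\pi$. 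The main obstacle I expect to be the rigorous verification of the join decomposition in (1), since it requires the Alexandrov analogue of the spherical rigidity theorem, together with careful application of Toponogov comparison to confirm that the natural map from the join is both distance-preserving and surjective.
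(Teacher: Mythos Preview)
The paper does not prove this proposition at all: it is quoted verbatim from \cite[Proposition~4]{GalSea} and used as a black box, so there is no ``paper's own proof'' to compare against. Your proposal is therefore an attempt to supply an argument the present paper deliberately outsources.

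Your overall strategy is sound and is close to how the result is actually established in \cite{GalSea}, but the mechanism you sketch for (1) is not quite the standard one and has a soft spot. You try to build the join map directly: for $\xi\notin S_x^\perp$ find a unique foot point $v_\xi\in S_x$, then \emph{extend} the geodesic from $v_\xi$ through $\xi$ to length $\pi/2$ and claim the endpoint lies in $S_x^\perp$. In a general Alexandrov space with $\curv\geq 1$ geodesics need not extend, and even when they do the claim that the endpoint is equidistant $\pi/2$ from \emph{every} point of $S_x$ needs justification; Toponogov alone does not hand you this. The argument in \cite{GalSea} sidesteps this by iterating the suspension rigidity theorem: since the orbit $G(x)$ is a Riemannian homogeneous manifold, every $v\in S_x$ has an antipode $-v\in S_x$ with $\angle(v,-v)=\pi$, so $\Sigma_xX$ is isometric to the spherical suspension over the equator $\{w:\angle(v,w)=\angle(-v,w)=\pi/2\}$; that equator again has $\curv\geq 1$ and contains the round $S^{k-1}=S_x\cap\{v,-v\}^\perp$, and one repeats. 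After $\dim S_x+1$ steps one obtains $\Sigma_xX\cong S_x * S_x^\perp$ with $S_x^\perp$ totally geodesic and $\curv\geq 1$. Your treatment of (2) is fine and matches the standard dichotomy.
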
 

We finally recall a crucial tool in the theory of transformation groups, the so-called Slice Theorem. This result provides a canonical identification of a small invariant neighborhoods of the orbit of through a given point $x$ with a fiber bundle over the orbit whose structure group is $G_x$. The fiber is the so-called slice, and for an Alexandrov space a slice can be identified with the cone over the space of unit normal directions to the orbit.

\begin{thm}[Slice Theorem, \protect{\cite[Theorem~B]{HaSe}} ] 
\label{THM:SLICE}
Consider a compact Lie group $G$ acting by isometries on an Alexandrov space X. Then for all $x\in X$, there is some $\varepsilon_0 > 0$ such that for all $\varepsilon < \varepsilon_0$ there is an equivariant homeomorphism 
\[
G\times_{G_x}K(S^{\perp}_x)\to B_{\varepsilon}(G(x)),
\]
where $K(S^{\perp}_x)$ is the cone over $S^{\perp}_x$.
\end{thm}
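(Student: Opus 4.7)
The plan is to reduce the theorem to an equivariant version of Perelman's conical neighborhood theorem and then exploit the join decomposition of $\Sigma_xX$ from the preceding proposition. First I would produce a $G_x$-equivariant pointed homeomorphism $\Psi \colon K_\varepsilon(\Sigma_xX) \to B_\varepsilon(x)$ sending the cone vertex to $x$. Since $G_x$ acts isometrically on $X$ fixing $x$, it acts isometrically on the space of directions $\Sigma_xX$, and one may use either the equivariant gradient exponential construction or an equivariant refinement of Perelman's stability theorem (as developed in the Alexandrov setting by Harvey--Searle) to arrange the conical homeomorphism to intertwine the two $G_x$-actions.

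Next I would invoke the splitting $\Sigma_xX \cong S_x \ast S_x^{\perp}$ supplied by the proposition above. The $G_x$-action preserves both join factors (since it fixes $x$ and preserves the orbit through $x$), and the cone over a join factors as a product of cones, so
$$K(\Sigma_xX) \;\cong\; K(S_x)\times K(S_x^{\perp})$$
as $G_x$-spaces. Writing $v_0$ for the vertex of $K(S_x)$, the subset $\{v_0\}\times K_\varepsilon(S_x^{\perp})$ plays the role of a ``normal slice'' transverse to the orbit at $x$, while the $K(S_x)$ factor is swept out by short isometries in $G$ moving $x$ along $G(x)$.

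I would then define the candidate slice map $\Phi \colon G\times_{G_x} K_\varepsilon(S_x^{\perp}) \to B_\varepsilon(G(x))$ by $\Phi([g,v]) = g\cdot \Psi(v_0,v)$. Well-definedness is immediate from the $G_x$-equivariance of $\Psi$. Surjectivity follows because the $G$-translates of $\Psi(\{v_0\}\times K_\varepsilon(S_x^{\perp}))$ together with the $G$-orbit of $x$ realize every direction in $S_x \ast S_x^{\perp} = \Sigma_xX$, and hence cover $B_\varepsilon(G(x))$ by the conical neighborhood model. Injectivity uses that if $\Phi([g_1,v_1]) = \Phi([g_2,v_2])$, then $g_1^{-1}g_2 \in G_x$ and, combined with $G_x$-equivariance of $\Psi$ together with the product structure on $K(\Sigma_xX)$, one recovers $v_1 = (g_1^{-1}g_2)\cdot v_2$. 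Since both domain and target are locally compact Hausdorff and $\Phi$ is continuous and proper on closed $\varepsilon$-balls, a standard compactness argument (or invariance of domain applied fiberwise) upgrades $\Phi$ to the desired equivariant homeomorphism.

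The main obstacle is Step one: Perelman's conical neighborhood theorem, as stated, produces a homeomorphism without regard to any ambient isometric action, so arranging it to be $G_x$-equivariant is the key technical point. This either relies on an equivariant stability result for Alexandrov spaces or on a direct construction via a $G_x$-equivariant gradient exponential map, and one must simultaneously ensure that the resulting homeomorphism respects the join splitting $\Sigma_xX = S_x \ast S_x^{\perp}$ in order to isolate the normal factor $K(S_x^{\perp})$ as a genuine slice.
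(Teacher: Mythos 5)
The paper does not actually prove this statement: it is recalled in the preliminaries as a known result (the Slice Theorem for isometric actions on Alexandrov spaces, due to Harvey and Searle \cite{HaSe}), so there is no internal proof to compare against. Your strategy --- an equivariant conical neighborhood at $x$, the splitting $K(\Sigma_xX)\cong K(S_x)\times K(S_x^{\perp})$ induced by $\Sigma_xX\cong S_x\ast S_x^{\perp}$, and then sweeping the normal cone factor around the orbit --- is the natural one and is close in spirit to the argument in the literature. However, as written the proposal has genuine gaps. The first is the one you flag yourself, and it is not a technicality: making Perelman's conical neighborhood theorem $G_x$-equivariant \emph{is} essentially the content of the theorem. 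Perelman's result carries no equivariance, the gradient exponential map is not known to be a homeomorphism onto a neighborhood, and an equivariant stability theorem is itself a substantial piece of work. Moreover, even granting a $G_x$-equivariant homeomorphism $\Psi\colon K_\varepsilon(\Sigma_xX)\to B_\varepsilon(x)$, you must separately arrange that it respects the join splitting so that $S:=\Psi(\{v_0\}\times K_\varepsilon(S_x^{\perp}))$ is a genuine slice; this does not follow from an abstract stability statement.

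The second gap is that bijectivity of $\Phi$ is asserted rather than proved. For injectivity you need the slice property: if $g\cdot S\cap S\neq\emptyset$ then $g\in G_x$. Your deduction that $g_1^{-1}g_2\in G_x$ presupposes exactly this; the product structure of the cone is a statement about directions \emph{at} $x$ and gives no control over where a group element outside $G_x$ (which moves $x$ to a different point of the orbit) sends points of $S$. The standard repair is to shrink $\varepsilon$ so that every point of $S$ has $x$ as its unique nearest point on the compact orbit $G(x)$, which requires a separate metric argument. Surjectivity has the same defect: realizing every direction at $x$ does not imply that the $G$-translates of $S$ cover a metric neighborhood of the orbit. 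Finally, ``invariance of domain applied fiberwise'' is not available here, since neither $X$ nor $K(S_x^{\perp})$ is a manifold in general; the correct endgame is the compactness argument (a continuous bijection from the compact space $G\times_{G_x}\overline{K_\varepsilon(S_x^{\perp})}$ onto a Hausdorff space is a homeomorphism), but that only applies after bijectivity has actually been established.
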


\subsection{Torus actions on $4$-manifolds}\label{SS: Torus actions on $4$-manifolds}
We conclude the  preliminaries section with a brief review of the equivariant and homeomorphism classifications due to Orlik, Raymond, Pao and Kim of closed $4$-manifolds admitting effective torus actions for the sake of completeness.  Observe that closed $4$-manifolds with any Riemannian metric are Alexandrov spaces, since each Riemannian metric has a global lower bound for its sectional curvature. Thus, by taking an equivariant Riemannian metric,  the classification of closed orientable $4$-manifolds with a torus actions, is a particular case of the main results of the present work.

Let us begin by recalling the equivariant classification of closed, orientable $4$-manifolds with an effective torus action.

\begin{thm}[\protect{\cites{OrlRayI,OrlRayII}}]
Let $M$ be a closed, orientable $4$-manifold admitting a $\soso$-action by homeomorphisms. Then the set of inequivalent (up to equivariant homeomorphism) effective $\soso$-actions on $M$ is in one-to-one correspondence with the set of unordered tuples of isotropy and topological invariants
\[
\left\{(b_1,b_2);\varepsilon; g; s;t; \big\{ \left\langle p_i,q_i\right\rangle \big\}_{i=1}^s; \big\{ \{a_{l},b_{l}\} \big\}_{l=1}^{t} ; \big\{ (\alpha_j;\gamma_{j,1},\gamma_{j,2}) \big\}_{j=1}^k   \right\}
\]
\end{thm}

To avoid repetition, the definition of the invariants appearing in the previous theorem, and their admissible values, are defined in Section \ref{S_Equivariant classification} for the more general case of isometric, effective torus actions on Alexandrov $4$-spaces. Let us only mention here that each invariant of the form  $\{a_{l},b_{l}\}$ corresponds to the invariant $\big( (a_{l},b_{l}), \pm 1\big)$ as formulated in Section \ref{S_Equivariant classification}. This is due to the fact that there are no topologically singular points in the manifold case.  

Although it is not needed for the proofs of the results stated in Section~\ref{S: Introduction}, for the sake of completeness, we recall here that Kim obtained the equivariant classification for closed non-orientable $4$-manifolds admitting an effective torus action  \cite{Kim}.  

Different orbit types correspond to the possible isotropy groups of the torus action.  An orbit is called a \textit{principal orbit} or \textit{$P$-orbit}  if the isotropy group of a point (and hence for any point) contained in the orbit is trivial. The set of points in $M$ whose orbits are $P$-orbits is denoted by $P$.  Theorem \ref{THM:Principal-orbit-theorem}, i.e the Principal Orbit Theorem, states that $P$ is an open and dense subset of $M$. Orbits with non-trivial finite isotropy acting by preserving the local orientation are called \textit{exceptional orbits} or \textit{$E$-orbits}. The subset of $M$ of points on $E$-orbits is called $E$. For the case of $4$-dimensional manifolds, The Slice Theorem~\ref{THM:SLICE} implies that  each $E$-orbit is isolated from other $E$-orbits, since we have a free linear action of a finite subgroup of $\soso$ on the circle $S^1$ (see \cite[p.~534]{OrlRayI} for the list of possible finite subgroups). The orbit of a point whose isotropy is isomorphic to the circle is called a \textit{circular orbit} or \textit{$C$-orbit}. We denote the set of points of $M$ contained in $C$-orbits by $C$. Finally, the set of points which are fixed points of the action (that is, having isotropy subgroup equal to $\soso$) are denoted by $F$. 

The orbit space $M^*$ is a $2$-manifold (possibly with boundary) in which the interior points are made up by $P^*\cup E^*$. Moreover, $E^*$ is a finite set. The boundary, which may be empty, is equal to $F^*\cup C^*$.



For the case in which $F\neq \emptyset$ some families of $4$-manifolds appear naturally as ``building blocks'' of $4$-manifolds admitting effective $\soso$ actions, and are presented next. Let $M$ be a closed $4$-manifold admitting an effective $\soso$-action. We say that $M$ is of \emph{type}: 
\begin{itemize}
\item[(i)] \emph{$R(m,n)$}, if $\gcd(m,n)=1$, and the orbit space $M^*$ is homeomorphic to an annulus where the inner circle boundary  is composed of orbits with isotropy 
\[
G(m,n) =\{ (\varphi, \theta)\ \mid\ m\varphi + n\theta =0, \ \mathrm{gcd}(m,n)=1\}.
\] 
The outer circle boundary consits of two arcs joined by their endpoints where one of them consists of orbits with $G(0,1)$ isotropy, while the other consits of orbits with $G(1,0)$ isotropy.

\item[(ii)] \emph{$T(m,n;m',n')$}, if $\gcd(m,n)=\gcd(m',n')= 1$, $mn-m'n'=\pm 1$, and the orbit space $M^*$ is homeomorphic to an annulus where the inner circle boundary  is composed of two arcs joined at their endpoints: one of them has $G(m,n)$ isotropy while the other has $G(m',n')$ isotropy.  The outer circle boundary consists of two arcs joined by their endpoints where the orbits in one of them has $G(0,1)$ isotropy, while the other arc consists of orbits with $G(1,0)$ isotropy.

\item[(iii)] \emph{$L(n;p,q;m)$}, if $\gcd(n,p,q)=1$, and the orbit space is homeomorphic to a $2$-disk with  the center point corresponding to an $E$-orbit with (non-normalized) Seifert invariants $(n;p,q;m)$ (see Section \ref{S_Equivariant classification}) and with the boundary consisting of two arcs joined by their corresponding endpoints. One arc consists of orbits with isotropy $G(m,n)$ while the other arc consists to orbits with isotropy $G(p,q)$. 
\end{itemize}

The  homeomorphism type of these manifolds was completely described by Pao, \cite{Pao1}, and they are the following (see \cite[Theorem~III.3]{Pao1}): 
\begin{itemize}
\item[(i)] For each $m,n$ with $\gcd(m,n)=1$, 
\[ 
 R(m,n) =
\begin{cases} 
 S^2\times S^2 \# S^3 \times S^1  & \text{if}\ mn\equiv 0 \ \ (\text{mod $2$}),\\
 \mathbb{C}P^2\# \overline{\mathbb{CP}^2} \# S^3\times S^1 & \text{if}\ mn\equiv 1 \ \ (\text{mod $2$}).
\end{cases}
\]
\item[(ii)] Observe that since $mn-m'n'=\pm 1$ then at least two of the four integers $m,n,m', n'$ must be odd and one is even. Take $t=0$ if there are two even integers, and $t=1$ otherwise. Then (see  \cite[Theorem~IV.1]{Pao1}), 
\[ 
 T(m,n;m',n') =
\begin{cases} 
 S^2\times S^2 \# S^2\times S^2 \# S^3 \times S^1  & \text{if}\ t=0,\\
 \mathbb{C}P^2\# \overline{\mathbb{CP}^2}\# \mathbb{C}P^2\# \overline{\mathbb{CP}^2}\# S^3\times S^1 & \text{if}\ t=1.
\end{cases}
\]
\item[(iii)] For every $m,n,p, q$ with  $\gcd(n,p,q)=1$, there are two homeomorphism types (see \cite[Theorem~V.1]{Pao1}):
\[ 
 L(n;p,q;m) =
\begin{cases} 
  L(n;0,1;1)  & \text{or}\\
  L(n;1,1;1). &  
\end{cases}
\]
We point out that when $n$ is odd, then $L(n;0,1;1)$ is homeomorphic to $L(n;1,1;1)$. But for the case when $n$ is even, the spaces $L(n;0,1;1)$ and $L(n;1,1;1)$ do not have the same homotopy type (see \cite[Theorem~V.2]{Pao1}). Both of these manifolds result from a $\soso$-equivariant surgery of type $(2,3)$ on $S^1\times S^3$. By performing a surgery  of type $(2,3)$ on one element in the fundamental group of the manifold, we can produce at most two different manifolds (see \cite[Corollary V.8]{Pao1}).  
\end{itemize}

With these definitions in hand, we now summarize the whole topological classification of closed, orientable $4$-manifolds with  an effective $\soso$-action.

\begin{thm}[\protect{\cites{OrlRayI,OrlRayII,Pao1,Pao2}}]\label{T: Homeomorpshism type of orientable closed 4-manifolds with effective T2 action}
Let $M$ be a closed, orientable $4$-manifold admitting a $\soso$-action by homeomorphisms. Then the following hold true:
\begin{enumerate}[(1)]
\item\label{T: Homeomorpshism type of orientable closed 4-manifolds with effective T2 action 1}  If $M$ is simply-connected, then $E=\emptyset$, $F\neq \emptyset$ and $M$ is equivariantly homeomorphic to an equivariant connected sum of a finite number of copies of $\mathbb{CP}^2$, $\overline{\mathbb{CP}^2}$, $\mathbb{CP}^2\# \overline{\mathbb{CP}^2}$, $S^2\times S^2$ and $S^4$. This connected-sum decomposition is not necessarily unique (see \cite[Section~5]{OrlRayI}). 
%

\item\label{T: Homeomorpshism type of orientable closed 4-manifolds with effective T2 action 2} Assume that $F\cup C = \emptyset$. Then $M$ admits a fibering over $S^1$ where the fiber is a $3$-dimensional Seifert manifold (see \cite[Section~2]{OrlRayII}).


\item\label{T: Homeomorpshism type of orientable closed 4-manifolds with effective T2 action 3} If $M$ is not simply-connected and $F\neq \emptyset$ then $M$ is equivariantly homeomorphic to an equivariant connected sum of the simply-connected manifolds appearing in item (1) and \textit{elementary} $4$-manifolds of types $L$, $R$ and $T$. The decomposition as a connected sum is possibly non-unique (see \cite[Section~3]{OrlRayII}). 

\item\label{T: Homeomorpshism type of orientable closed 4-manifolds with effective T2 action 4} Suppose $F=\emptyset$ and $C\neq \emptyset$, then the following hold (see \cite{Pao2}):
\begin{itemize}
\item[(a)] If there are $C$-orbits with isotropies $G(m_1,n_1)$ and $G(m_2,n_2)$ such that $m_1n_2-m_2n_1=\pm 1$, then $M$ is an equivariant connected sum of copies of $S^4$, $S^2\times S^2$, $\mathbb{C}P^2$, $\overline{\mathbb{C}P^2}$, $S^3\times S^1$ and manifolds of type $L$. 
\end{itemize}
In the remaining cases, that is, when there are no mutually orthogonal $C$-orbits, the topological classification is not yet complete. However, the following properties hold:
\begin{itemize}
\item[(b)] If $M$  and the action have the following set of  invariants 
\[
\left\{o,g,s, -, \left\{ \left\langle p_i,q_i\right\rangle\right\}_{i=1}^s ; \big\{ (\alpha_j;\gamma_{j,1},\gamma_{j,2}) \big\}_{j=1}^k \right\},
\] 
then, for the second Steiffel-Whitney class $\omega_2(M)$ of $M$, we have $\omega_2(M)\neq 0$ if and only if there exist integers $i$ and $j$ with $2\leq i$ and $j\leq s$ such that 
\begin{align*}
p_i\equiv p_j\equiv q_i\equiv 1 & \ \ (\text{mod $2$}),\\
q_j\equiv 0 & \ \ (\text{mod $2$}). 
\end{align*}
\item[(c)] The integers $2g+s$, $k$, $\alpha_1, \ldots \alpha_k$ and $m=\gcd(p_1,\ldots,p_s)$ determine the fundamental group of $M$ (see \cite[Section 3]{Pao2}). 

\end{itemize}
\end{enumerate}
\end{thm}

It is worth remarking that the case $F\cup C = \emptyset$ was treated in \cite{OrlRayII} as a particular case of \cite{ConRay}, where the authors classify holomorphic Seifert fiberings through so called \textit{Bieberbach classes} (see also the work of Ci\v{s}ang in \cite{Zie}). We refer the reader to \cite{ConRay} and \cite[Section 2]{OrlRayII} for a detailed exposition. 
%

In view of the non-uniqueness of the connected sum decompositions in items \eqref{T: Homeomorpshism type of orientable closed 4-manifolds with effective T2 action 1} and \eqref{T: Homeomorpshism type of orientable closed 4-manifolds with effective T2 action 3}  in Theorem~\ref{T: Homeomorpshism type of orientable closed 4-manifolds with effective T2 action} above, Pao defined the so-called \textit{normal decomposition} for $4$-manifolds which admit an effective $\soso$-action with fixed points, and proved that this decomposition is unique. We say that a connected sum decomposition of such a $4$-manifold $M$ into summands $S^4$, $S^2\times S^2$, $S^3\times S^1$, $\mathbb{C}P^2$, $\overline{\mathbb{C}P^2}$, and manifolds of type $L$, is a normal decomposition if the number of copies of $S^4$, $\mathbb{C}P^2$, $\overline{\mathbb{C}P^2}$ and the manifolds of type $L$, denoted by $N(S^4)$, $N(\mathbb{C}P^2)$, $N(\overline{\mathbb{C}P^2})$, $N(L(n;0,1;1))$, and $N(L(n;1,1;1))$ respectively, satisfy the following conditions: 
\begin{itemize}
\item[(i)] $N(S^4)=1$,
\item[(ii)] $N(\overline{\mathbb{C}P^2})=0$ or $N(\overline{\mathbb{C}P^2})= N(\mathbb{C}P^2)=1$, 
\item[(iii)] if $N(\mathbb{C}P^2)\neq 0$ then $N(L(n;1,1;1))=0$ for all $n=2,3,\ldots,$
\item[(iv)] $N(L(n;1,1;1))=0$ for all $n=3,5,7,\ldots$.
\end{itemize}  

\begin{thm}[\protect{\cite[Theorem II.4.2]{Pao1}}]
Let $M$ be a closed, orientable $4$-manifold admitting a $\soso$-action by homeomorphisms with $F\neq \emptyset$. Then $M$ has a unique normal decomposition.  
\end{thm}

\subsection{Cohomogeneity-one $3$-manifolds}\label{SS:Cohomogeneity-one 3-manifolds}

Throughout this article the classification of coho\-mo\-geneity-one closed $3$-manifolds due to Mostert \cite{Mos} and  Neumann \cite{Neu} is used and  we review it here as well for the convenience of the reader. 

Let $G$ be a compact, connected Lie group acting by diffeomorphisms on a closed smooth $3$-manifold $M$ with cohomogeneity one. Then the orbit space $M^\ast$ is homeomorphic to either a circle $S^1$, or to a closed bounded interval $I=[-1,1]$. In the first case, there is a single orbit type with corresponding isotropy $H$. Thus, $M$ is a fiber bundle over $S^1$ with fiber $G/H$ and structure group $N(H)/H$. In this case, $M$ can be classified by the components of $N(H)/H$. 

In the case that the orbit space $M^\ast$ is homeomorphic to $I$, the two endpoints of $I$ correspond to the only two non-principal orbit types (which could be the same), with isotropy subgroups $K^-$, $K^+$, while the interior corresponds to  principal orbit types with isotropy $H$. Moreover, there is a group diagram made up of inclusions 
\begin{equation*}
		\xymatrix{ & G    & \\
			K^{-} \ar[ru]^{j_{-}} & & K^{+} \ar[lu]_{j_{+}}\\
			& H  \ar[lu]^{i_{-}} \ar[ru]_{i_{+}}& }
\end{equation*}	
where $K^{\pm}/H$ are isometric to positively curved $r_i$-spheres, with $0\leqslant r_i \leqslant 2$. Using the Slice Theorem, it is possible to prove that $M$ is equivariantly diffeomorphic to the union of two fiber bundles $ G\times_{K^+}D^{r_1+1}$ and  $ G\times_{K^+}D^{r_2+1}$, where $D^{r_i+1}$ is the closed unit disk in $\mathbb{R}^{r_i+1}$. Here the union is made via an equivariant diffeomorphism between the  boundaries of $G\times_{K^{\pm}}D^{r_i+1}$, which are diffeomorphic to a principal orbit $G/H$. Therefore, $M$ can be classified via this construction by the components of the double quotient $N_1 \backslash N(H) / N_2 $, where $N_1= N(H)\cap N(K^{-})$ and $N_2= N(H)\cap N(K^{+})$. 
 
Using these structure results, Mostert and Neumann obtained that either $G=\soso$ with $H=\{e\}$, or $G=\mathrm{SO}(3)$ with $H=\mathrm{O}(2)$, or $H= \mathrm{SO}(2)$. In Table~\ref{TBL:cohom1-3mflds} we collect the possibilities for the homeomorphism type of $M$, when $M^\ast$ is a closed interval, together with the corresponding orbit structures. Here, by orbit structure we mean the ordered tuple $(H,K^-,K^+)$. We denote the Klein bottle as  $\mathrm{Kl}$, the non-orientable $S^2$ bundle over $S^1$ by $S^2\tilde{\times} S^1$,  a lens space as $L(p,q)$,  the closed M\"obius band as $\mathrm{Mb}$, and $A$ denotes the manifold obtained by gluing $\mathrm{Mb}\times S^1$ to $S^1\times \mathrm{Mb}$ via the identity map along the boundary $S^1\times S^1$.

Galaz-Garcia and Zarei proved that for a topological manifold of dimension at most $4$, any continuous group action by a Lie group is equivariantly diffeomorphic to a smooth action on a smooth manifold.

\begin{thm}[\protect{\cite[Corollary~E]{GalZar}}]\label{T: For dim at most 4, every topol coho 1 action is smooth}
Consider a topological manifold $M$ with an (effective) topological action of a compact Lie group $G$ of cohomogeneity one. If $M$ has dimension at most $4$, then $M$ is equivairiantly homeomorphic to a smooth manifold $N$ with an effective smooth action by $G$.
\end{thm}

Therefore for dimension $3$, Table~\ref{TBL:cohom1-3mflds} gives a complete list of continuous actions of $\soso$ on topological $3$-manifolds.


\begin{table}[ht]
\resizebox{\textwidth}{!}{%
\begin{tabular}{|c|c|c|}
\hline
\textbf{Homeomorphism type of $M$} & \textbf{Orbit Structure for $G=\soso$} & \textbf{Orbit Structure for $G=\mathrm{SO}(3)$} \\ \hline
 $T^3$ & $\{e\}$ & $\emptyset$  \\ \hline
 $\mathrm{Kl}\times S^1$ & $\left(\{e\}, \mathbb{Z}_2\times \{1\}, \mathbb{Z}_2\times \{1\}  \right)$ & $\emptyset$ \\ \hline
 $A$ &  $\left(\{e\}, \mathbb{Z}_2\times \{1\}, \{1\}\times\mathbb{Z}_2   \right)$ & $\emptyset$ \\ \hline
$\mathbb{R}P^2\times S^1$ & $\left(\{e\}, \mathbb{Z}_2\times \{1\}, G(1,0)   \right)$ & $\left( \mathrm{O}(2), \mathrm{O}(2), \mathrm{O}(2)  \right)$ \\ \hline 
$S^2\widetilde{\times}S^1$ & $\left(\{e\}, \mathbb{Z}_2\times \{1\}, G(1,0)   \right)$ & $\left( \mathrm{SO}(2), \mathrm{SO}(2), \mathrm{SO}(2)  \right)$ \\ \hline
$S^2\times S^1$ & $\left(\{e\}, G(1,0), G(1,0) \right)$ & $\left( \mathrm{SO}(2), \mathrm{SO}(2), \mathrm{SO}(2)  \right)$\\ \hline
$L(p,q)$ & $\left(\{e\}, G(1,0), G(p,q) \right)$ with $\gcd(p,q)=1$ & $\emptyset$ \\ \hline
$S^3$ & $\left(\{e\}, G(1,0), G(0,1) \right)$ & $\left(\mathrm{SO}(2), \mathrm{SO}(3), \mathrm{SO}(3)  \right)$\\ \hline
$\mathbb{R}P^3$ & $\left(\{e\}, G(1,0), G(1,2) \right)$ & $\left( \mathrm{SO}(2), \mathrm{O}(2), \mathrm{SO}(3) \right)$ \\ \hline
$\mathbb{R}P^3\# \mathbb{R}P^3$ & $\emptyset$  & $\left(\mathrm{SO}(2), \mathrm{O}(2), \mathrm{O}(2) \right)$ \\ \hline
\end{tabular}%
}
\caption{Cohomogeneity-one $3$-manifolds}
\label{TBL:cohom1-3mflds}
\end{table}


\section{Local structure of topological singular points}
\label{S:Orbit space and Orbit types}



In general, the set of topologically singular points $\Sing_X$ of an Alexandrov space $X$ may be wildly arranged in $X$. However, in the presence of an isometric $\soso$-action the set $\Sing_X$ acquires more structure. We begin by pointing out that since $\soso$ acts by isometries, then $\Sing_X$ is an invariant subset of $X$ under the action of $\soso$. 

Observe that $\Sing_X$ is invariant under the isometric action of $T^2$. We split the set $\Sing_X$ into the disjoint union of the following subsets:

\begin{eqnarray*}
\Sing^0_X & = & \left\{x\in \Sing_X \mid \dim G(x)=0 \right\}\\
\Sing^1_X & = & \left\{x\in \Sing_X \mid \dim G(x)=1 \right\}\\
\Sing^2_X & = & \left\{x\in \Sing_X \mid \dim G(x)=2 \right\}
\end{eqnarray*}

%

\subsection{Orbits of points in $\Sing_X$}
\label{subsec-structure-S_X}

As mentioned before, the canonical projection $\pi:X \to X^\ast$ is a submetry. Therefore a small neighborhood $B_{\varepsilon}(x^{\ast})\subset X^\ast$ centered at $x^\ast\in X^\ast$ is homeomorphic to $\pi(B_{\varepsilon}(x)) = B_{\varepsilon}(x)^\ast$, where $\pi(x)=x^\ast$.  The Conical Neighborhood Theorem of Perelman (see \cite[Theorem 0.1]{Per}, \cite[Theorem 6.8]{Kap}) together with Proposition~\ref{Prop:Space of dierections of 4Alex}
imply that  $B_{\varepsilon}(x)$ is either homeomorphic to a cone over the suspension of the real projective plane $K(\susp(\mathbb{R}P^2))$, or a cone over a spherical manifold.  Furthermore, Theorem \ref{THM:SLICE} yields that $B_{\varepsilon}(x)^\ast$ is homeomorphic to $K(S^{\perp}_x/G_x)$. 

To further investigate the local structure of $X^\ast$ at orbits of points in $\Sing_X$ we split the analysis using the dimension of the orbits.

\begin{prop}
\label{P: S_X is finite collection of circle and points} Let $X$ be a closed, orientable Alexandrov $4$-space with an effective, isometric $\soso$-action. Then $\Sing_X=\Sing^0_X$, that is, the set of topologically singular points of $X$ consists 
only of fixed points.  
\end{prop}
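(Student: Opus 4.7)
The statement has two parts: (i) $TS_X^1 = \emptyset$, and (ii) $TS_X$ is a finite set of fixed points. Part (i) is already essentially carried out in the paragraph immediately preceding the proposition, so my plan is to follow that line and then supply the two items still missing: the \emph{fixed-point} assertion for $TS_X^0$ and the \emph{finiteness} of $TS_X$.

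First, I would make precise why $TS_X = TS_X^0 \sqcup TS_X^1$ exhausts all possibilities. Perelman's codimension bound gives $\dim TS_X \le n-3 = 1$, while an orbit contained in $TS_X$ has dimension at most $\dim TS_X \le 1$. Combining this with the already-established $TS_X^1 = \emptyset$ (proved via $\Sigma_x X = S_x \star S_x^\perp = \susp(S_x^\perp)$ with $S_x^\perp$ homeomorphic to $S^2$ or $\RP^2$, the former contradicting $x \in TS_X$ and the latter contradicting orientability), we conclude $TS_X = TS_X^0$.

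Next, I would observe that each $x \in TS_X^0$ is automatically a fixed point of the $\soso$-action. Indeed, since $\soso$ is connected and acts continuously, the orbit $G(x) \cong \soso/G_x$ is connected; being $0$-dimensional, it must be a single point, forcing $G_x = \soso$. Hence $TS_X^0 \subset X^{\soso}$.

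The main step that is not yet in the excerpt is finiteness, which I would obtain by showing that points of $TS_X$ are \emph{isolated}. Here the orientability of $X$ is essential: by the classification of spaces of directions recalled in Section~\ref{S:PRELIM}, for every $y \in X$ the space $\Sigma_y X$ is either $S^3$, $\susp(\RP^2)$, or a closed spherical $3$-manifold; the $\susp(\RP^2)$ case is excluded because at such a point $H^3(\Sigma_y X;\Z) = H^2(\RP^2;\Z) = \Z/2$ is not $\Z$, contradicting local orientability via the isomorphism $H^{n-1}(\Sigma_y X;\Z) \cong H^n(X, X \smallsetminus \{y\};\Z)$ and Theorem~\ref{T_Orientability}. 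Therefore $\Sigma_x X$ is, in every case, a topological $3$-manifold, so $\Sigma_z \Sigma_x X \cong S^2$ at every $z \in \Sigma_x X$.

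Fix $x \in TS_X$ and apply Perelman's Conical Neighborhood Theorem to obtain a neighborhood of $x$ pointed-homeomorphic to $K(\Sigma_x X)$. At any non-cone point $(t,z)$ with $t > 0$, the tangent cone splits as $\RR \times T_z \Sigma_x X$, so its space of directions is $\susp(\Sigma_z \Sigma_x X) \cong \susp(S^2) \cong S^3$; that is, every non-cone point in the neighborhood is topologically regular. Consequently $x$ is isolated in $TS_X$. Since $TS_X$ is closed in the compact space $X$ and discrete, it is finite, completing the proof.

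The only subtlety I anticipate is making sure the local-cone argument is phrased carefully enough that ``non-cone points are topologically regular in $X$'' (not merely in the model cone), which is automatic from the pointed homeomorphism furnished by the Conical Neighborhood Theorem. Everything else is essentially bookkeeping once the structural ingredients (codimension bound, classification of spaces of directions, orientability, Slice Theorem) are in place.
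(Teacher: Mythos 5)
Your core argument is the same as the paper's: the paper's entire proof is the paragraph preceding the proposition, namely that for $x\in TS_X^1$ one has $\Sigma_xX=S_x\ast S_x^{\perp}=\susp(S_x^{\perp})$ with $S_x^{\perp}\cong S^2$ or $\RP^2$, the first contradicting $x\in TS_X$ and the second contradicting orientability, so $TS_X^1=\emptyset$. Where you genuinely add something is in the two claims the paper leaves implicit: that points of $TS_X^0$ are fixed points (your connectedness-of-orbits remark) and, more substantially, finiteness. The paper never argues finiteness explicitly; implicitly it follows from the later analysis in Section~3.4, where the induced $\soso$-action on $\Sigma_xX$ is shown to have only $\{1\}$, $\ZZ_2$ and $\mathrm{SO}(2)$ as isotropy groups, so no direction at a fixed point is fixed and fixed points are isolated. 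Your route via the conical neighborhood is a reasonable alternative, but be careful with the step you call ``automatic'': the space of directions is a metric invariant, so the pointed homeomorphism onto $K(\Sigma_xX)$ does not directly give $\Sigma_yX\cong\susp(\Sigma_z\Sigma_xX)$ for nearby $y$. What it does give is that $y$ has a neighborhood homeomorphic to an open subset of $\Sigma_xX\times(0,1)$, hence a manifold neighborhood; you then need the (standard, local-homology) observation that, given the classification of spaces of directions in dimension $4$, a point is topologically regular if and only if it is a manifold point, since $K(L(q,p))$ with $(q,p)\neq(1,0)$ and $K(\susp(\RP^2))$ fail to be manifolds at the cone point. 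With that one sentence added, your isolation argument is complete and the proof is correct.
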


\begin{proof}
Assume that there exists $x\in \Sing_{X}^{2}$. By Proposition~\ref{P: Normal space of directions} we have $\Sigma_xX = S_x \star S_x^\perp$. Since  $\dim G(x) = 2$, $S_x = S^1$, and thus the space of normal directions $S_x^\perp$ is a positively curved Alexandrov $1$-space. Thus $S_x^\perp$ is  homeomorphic to $S^1$. But this implies that the space of directions $\Sigma_x X = S^1\star S^1 = S^3$. Thus $x$ is a regular point, which is a contradiction.

Assume there exists $x\in \Sing_{X}^{1}$. By Proposition~\ref{P: Normal space of directions} we have $\Sigma_xX = S_x \star S_x^\perp$. Since  $\dim G(x) = 1$, $S_x$ consists of two points and hence  the space of normal directions $S_x^\perp$ is a positively curved Alexandrov $2$-space. Thus $S_x^\perp$ is either homeomorphic to $S^2$ or $\RP^2$. Therefore,  we have that $\Sigma_xX$ is either homeomorphic to $\susp (S^2) = S^3$ or $\susp( \RP^2)$. The first case means that $x$ is a topologically regular point, which is a contradiction. Moreover,  the case where $\Sigma_xX = \susp (\RP^2)$ contradicts the orientability of $X$. Hence, we conclude that $ \Sing_{X}^{1}=\emptyset$. Recall that the isotropy subgroups, are closed subgroups of $\soso$. Since for $x\in \Sing_{X}^0$ we have an homeomorphism between $\soso(x)$ and $\soso/\soso_x$, we conclude that $\soso_x$ is a two-dimensional subgroup. This implies that the isotropy subgroup is $\soso$, and $x$ is a fixed point.
\end{proof}

In the following lemma we describe the normal space of directions for each orbit type of the effective action by isometries of $\soso$ on $X$. 

\begin{lem}\label{L: Normal spaces of directions}
 Let $X$ be a closed, orientable Alexandrov $4$-space admitting an effective isometric $T^2$-action. Then the normal space of directions to an orbit must be one of the following:
\begin{enumerate}[(a)]
\item\label{L: Normal spaces of directions-principal} For a two dimensional orbit, 
the normal space of directions is homeomorphic to $S^1$;
\item\label{L: Normal spaces of directions-circle} For a  one dimensional orbit, 
the normal space of directions is homeomorphic to $S^2$; and
\item\label{L: Normal spaces of directions-fixed point} For a zero dimensional orbit, the space of directions is homeomorphic to a $3$-dimensional spherical manifold.
\end{enumerate}
\end{lem}

\begin{proof}
We begin by proving \eqref{L: Normal spaces of directions-principal}. Let $x\in X$ be an element contained in a two dimensional orbit. Thus its normal space of directions is  a closed positively curved  $1$-dimensional Alexandrov space by Proposition~\ref{P: Normal space of directions}; i.e.\ it is homeomorphic to the circle $S^1$. Consider now $x\in X$ contained in a one dimensional orbit. Observe that by Proposition~\ref{P: S_X is finite collection of circle and points}, it is  topologically regular. Moreover, the tangent sphere to the orbit consists of two points. Thus by Proposition~\ref{P: Normal space of directions}, its normal space of directions is a closed $2$-dimensional Alexandrov space of curvature bounded below by $1$. Hence $S^{\perp}_{x}$ is $S^2$.


We consider $x\in X$ in a zero dimensional orbit. Then, $x$ is a fixed point and Proposition~\ref{Prop:Space of dierections of 4Alex} gives that $\Sigma_x X$ is either homeomorphic to a spherical $3$-manifold, or to $\susp(\RP^2)$. The orientability of $X$ then implies that $\Sigma_xX$ is a spherical $3$-manifold.

\end{proof}

\section{Local structure of the orbit space}
\label{S:Local structure around topological regular points}

In this section, we give the local characterization of the action  by describing the structure the  orientable $4$-dimensional Alexandrov space$X^\ast$ around the orbits of an isometric effective action of $\soso$ . We stress that most of the contents of this section are known (see for example, \cite{OrlRayI, OrlRayII}), but for the sake of completeness we review them here. From Lemma~\ref{L: Normal spaces of directions}, it follows that  an orientable $4$-dimensional Alexandrov  space $X$ with an isometric effective $\soso$-action  is in fact a rational cohomology manifold. Note that $X\setminus \Sing^0_X$ is an orientable manifold by Theorem~\ref{T_Orientability}. In particular, $X\setminus \Sing_X^0$ is an integral cohomology  manifold.   Such spaces were treated  in \cite[Appendix]{OrlRayI}. However, thanks to the Slice Theorem (Theorem \ref{THM:SLICE}), we have a clear picture of the local structure of an orientable  $4$-dimensional Alexandrov space with an effective isometric $\soso$-action and its orbit space. This enables us  to give a more precise description of the action.


The possible isotropy groups correspond to closed subgroups of $\soso$. In order to set notation, let us assume that $\soso$ is parametrized by $(\varphi,\theta)$ with $0\leq\varphi\leq 2\pi$ and $0\leq \theta\leq 2\pi$. Then the closed subgroups of $\soso$ consist of $\{1\}$, $\ZZ_n$, $\ZZ_n\times \ZZ_m$, the subgroups of the form
\[
G(m,n)=\{ (\varphi, \theta)\ \mid\ m\varphi + n\theta =0, \ \mathrm{gcd}(m,n)=1\},
\]
$G(m,n)\times \ZZ_p$ and $\soso$ itself. Note that $G(m,n)$ is isomorphic to the circle group $\mathrm{SO}(2)$. 

We write, $G=\soso$, and assume it acts isometrically and effectively on a closed, oriented Alexandrov $4$-space $X$. We list all the possible isotropy groups, slices and the local picture of the orbit space  . 


\begin{prop}\label{P: isotropy groups}
Let $X$ be an orientable closed Alexandrov $4$-space with an effective isometric $T^2$-action. Then the possible isotropy groups are either  the trivial one, $T^2$,  or of the form $\ZZ_p$,  or $G(m,n)$.
\end{prop}

\begin{proof}
We only need to show that $G(m,n)\times \ZZ_p$ and $\ZZ_n\times\ZZ_m$ cannot be the isotropy group of any $x\in X$. We consider two cases: When $x$ is not a topologically singular point, then the analysis done by Orlik and Raymond in \cite{OrlRayI,OrlRayII} applies. That is, for a topological regular point $x\in X$ the possible isotropy groups are the trivial subgroup, $T^2$, or one of the form $\ZZ_p$, or $G(m,n)$ with $\gcd (m,n) = 1$. From  Proposition~\ref{P: S_X is finite collection of circle and points}, we know that for the case when $x$ is a topological singular point, it is a fixed point which concludes the proof.
\end{proof}

\subsection{Local structure of $X^{\ast}$ around points in $\Sing_{X}$}
\label{Sub: Local structure of X^* around points in S_X^0}


Given a point $x\in \Sing_X$ we describe a small neighborhood in $X^\ast$ of its orbit $x^\ast\in X^\ast$, and the isotropies of the orbits contained in this neighborhood. First, recall that $x$ is a fixed point of the $\soso$-action on $X$, and $\Sigma_x X$ is a spherical space form by Lemma~\ref{L: Normal spaces of directions}, Part \eqref{L: Normal spaces of directions-fixed point}. Moreover, it is not $ S^3$ as $x$ is a singular point. Since $\soso$ acts on $\Sigma_x X$ isometrically and effectively, the action is a continuous action of cohomogeneity one on a spherical space form. 

Moreover, since the space of directions $\Sigma_x X$ is a  positively curved Alexandrov space, by the generalized Bonnet--Myers Theorem for positively curved Alexandrov spaces \cite[Theorem~2.10]{HaSe}, we conclude that $\Sigma_x X$ has finite fundamental group. Since the fundamental group surjects onto the fundametal group of $\Sigma_x X/\soso$, the orbit space $\Sigma_xX /\soso$ is a closed interval. 

By the classification of cohomogeneity one actions on $3$-dimensional manifolds \cite{Neu,GalZar} and by the fact that $\Sigma_x X$ is a spherical space form it follows that 
 $\Sigma_xX$ is a lens space $L(q,p)$, and the orbit space $\Sigma_xX/\soso$ has group diagram of the form $(\soso, G(1,0),G(p,q),\{1\})$, for some appropriate splitting of $\soso$ (see Table~\ref{TBL:cohom1-3mflds}).  Furthermore, it follows from \cite{Neu} that  $(p,q)\neq (1,0)$ and $(p,q)\neq (0,1)$. Otherwise we would have that $\Sigma_xX$ is a 3-sphere or $S^2\times S^1$, respectively, which yields a contradiction, since $x\in\Sing_X$ and $\Sigma_x X$ has finite fundamental group. Therefore, for a general  group diagram  of the form $(\soso, G(a,b), G(c,d), \{1\})$ this implies that 
 we have $ad-bc \neq 0$ and $ad-bc \neq \pm 1$. Figure \ref{fig:orbit-space-t2-action-on-lens-space} depicts the local structure of $X^\ast$ around $x\in \Sing^0_X$.

\begin{figure}[ht]
\begin{center}
\begin{tikzpicture}[rotate=90,scale = 0.6]
\fill[color=gray!15!white] (-1.99,3.98) -- (0,0) -- (1.99,3.98)-- cycle;
\draw [] (-1.99,3.98) -- (0,0) -- (1.99,3.98)-- cycle;
\draw[color = black, fill=black] (0,0) circle (2pt) node () [right]{$ \ \soso$};
\node at (0,1.1)  {$\ \ \ \ x^{*}$};
\node at (1,1.8) [above] {$\ \ \ \  G(a,b)$};
\node at (-1,1.8) [below] {$\ \ \ \ G(c,d)$};
\node at (0,4) [left] {$\{1\}$};
\end{tikzpicture}
\caption{Neighborhood in $X^\ast$ around the orbit of a topologically singular point $x\in \Sing_{X}^{0}$.}
\label{fig:orbit-space-t2-action-on-lens-space}
\end{center}
\end{figure}

Moreover, since $X$ is compact, we conclude that the set $\Sing_X$ consists of a finite set of fixed points.

\begin{lem}
Let $X$ be a closed, orientable Alexandrov $4$-space with an effective, isometric $\soso$-action. Then $\Sing_X$ consist of a finite number  of fixed points.  
\end{lem}

\subsection{Fixed points}\label{Sub: Regular Fixed points}

 Assume that $x\in X\smallsetminus \Sing_X$ is a point with isotropy group $G_x=\soso$, i.e. $x$ is a fixed point. To describe the local picture of the orbit space around $x^\ast$, we need to examine the effective action of $\soso$ on the normal space of directions to the orbit, that is $S_x^\perp= S^3$. A priori the action of $\soso$  on $S^3$ is only continuous, but since it is of cohomogeneity one, by Theorem~\ref{T: For dim at most 4, every topol coho 1 action is smooth}, 
there is an equivariant homeomorphism to the standard smooth $\soso$-action on $S^3$. We recall that the standard action of  $\soso$ on $S^3$ is as follows:
 
\begin{align*}
\soso\times S^3&\longrightarrow S^3\\
(\varphi, \theta), (z_1, z_2))&\longmapsto (e^{i m \varphi} e^{i n \theta}z_1, e^{i p \varphi} e^{i q \theta}z_2),
\end{align*}
where we regard $S^3\subseteq \CC^2$, and assume $\mathrm{gcd}(m, n)=1$, and $\mathrm{gcd}(p, q)=1$. Note that this action is effective if and only if $mq-np=\pm 1$.
The group diagram associated to the cohomogeneity one action of $\soso$ on $S^3$ is $(\soso, G(m, n), G(p, q), \{1\})$. Then we have the local picture of $X^\ast$ around $x^\ast$ as in Figure~\ref{FIG_Regular_Fixed}.

%

\subsection{One dimensional orbits}
\label{SS:One dimensional orbits}

Assume that $x\in X\smallsetminus \Sing_X$ is a point with a one dimensional orbit. In this case, by Lemma~\ref{L: Normal spaces of directions}, Part \eqref{L: Normal spaces of directions-circle}, we have that $S_x^\perp\cong S^2$. Furthermore, the isotropy group $G_x$ is also $1$-dimensional. To obtain the local structure of $X^\ast$ around $x^\ast$, namely, $K((S_x^\perp)^\ast)$, the classification of the effective continuous actions of $1$-dimensional compact Lie groups on $S^2$ is needed. By Theorem~\ref{T: For dim at most 4, every topol coho 1 action is smooth} 
this action can be assumed to be a smooth action. From \cite[Section~2.3]{Hoe} we get that this action of $G_x$ on $S^2$ is equivalent to the action of $\mathrm{SO}(2)$, by rotating $S^2$ with respect to the north-south poles. The group diagram of this cohomogeneity one action is $(G(m,n), G(m, n), G(m, n), \{1\})$ and the orbit space $(S_x^\perp)^\ast$ is a closed interval. Therefore, $x^\ast$ is a boundary point of $X^\ast$ and the local structure of $X^\ast$ around such orbits is as in Figure~\ref{FIG_Structure_of_RC}.

\begin{figure}[ht]
\centering

\begin{subfigure}[t]{0.45\textwidth}
\centering
\begin{tikzpicture}[rotate=90,scale = 0.6]
\fill[color=gray!15!white] (-1.99,3.98) -- (0,0) -- (1.99,3.98)-- cycle;
\draw [] (-1.99,3.98) -- (0,0) -- (1.99,3.98)-- cycle;
\draw[color = black, fill=black] (0,0) circle (2pt) node () [right]{$ \ \soso$};
\node at (0,1.1)  {$\ \ \ \ x^{*}$};
\node at (1,1.8) [above] {$\ \ \ \ G(m, n)$};
\node at (-1,1.8) [below] {$\ \ \ \ G(p,q)$};
\node at (0,4) [left] {$\{1\}$};
\end{tikzpicture}
\caption{Structure of $B_{\varepsilon}(x^\ast)$ for a topologically regular fixed point.}
\label{FIG_Regular_Fixed}
\end{subfigure}
\hfill
\begin{subfigure}[t]{0.45\textwidth}
\centering
\begin{tikzpicture}[rotate=90,scale = 0.6]
\fill[color=gray!15!white] (-1.99,3.98) -- (0,0) -- (1.99,3.98)-- cycle;
\draw [] (-1.99,3.98) -- (0,0) -- (1.99,3.98)-- cycle;
\draw[color = black, fill=black] (0,0) circle (2pt) node () [right]{$\ G(m, n)$}; 
\node at (0,1.1)  {$\ \ \ \ x^{*}$};
\node at (1,1.8) [above] {$\ \ \ \ G(m, n)$};
\node at (-1,1.8) [below] {$\ \ \ \ G(m, n)$};
\node at (0,4) [left] {$\{1\}$};
\end{tikzpicture}
\caption{Structure of $B_{\varepsilon}(x^\ast)$ for a $1$-dimensional orbit of a topologically regular point.}
\label{FIG_Structure_of_RC}
\end{subfigure}
\caption{Neighborhoods in $X^\ast$ around  orbits of topologically regular points.}\label{F: Neighborhoods}
\end{figure}

\subsection{Finite Isotropy Groups} 

Let $x\in X\smallsetminus \Sing_X$ and $G_x$ be its isotropy group. Assume that $G_x$ is finite. Then, the orbit $G(x)$ is $2$-dimensional, and the normal space of directions to this orbit, $S_x^{\perp}$, is  $1$-dimensional. Therefore, $S_x^{\perp}\cong S^1$. 

To describe the slice and local picture of $X^\ast$ around $x^\ast$, we need to examine the effective actions of finite groups on $S^1$, preserving the orientation. Since $G_x$ is a closed subgroup (as it is finite) of the Lie group $\soso$, it is a compact Lie group by Cartan's theorem. Since we are assuming it preserves the orientation of $S^1$, it is conjugate to a closed subgroup of $\mathrm{SO}(2)$ (see \cite[Section~4.1]{Ghy}). Thus we conclude that $G_x$ is a cyclic group acting by rotations on $S^1$.

Hence, the local picture of the orbit space $X^\ast$ around $x^\ast$ is a $2$-disk contained in the interior of $X^\ast$. All of the points contained in this disk have trivial isotropy except  for $x^\ast$, which has $G_x$ as isotropy. Therefore, $x^\ast$ is an isolated interior point.

\subsection{Orbit types}

We have different orbit types according to the possible admissible isotropy groups of the action. In the following we recall those that already appear in the manifold case, and define a new orbit type to account for the presence of topologically singular points.

As in the manifold case, an orbit with trivial isotropy group is  called a \textit{principal orbit} or \textit{$P$-orbit}. We denote the set of points in $X$ whose orbits are $P$-orbits by $P$. By Theorem \ref{THM:Principal-orbit-theorem}, $P$ is an open and dense subset of $X$ consisting entirely of topologically regular points. Points having orbits with non-trivial finite isotropy are topologically regular points, and therefore, the concept of local orientation is well defined: An orbit with non-trivial finite isotropy acting by preserving the local orientation is called an \textit{exceptional orbit} or \textit{$E$-orbit}. The subset of $X$ of points lying on $E$-orbits is denoted by $E$. Each $E$-orbit is isolated from other $E$-orbits. The orbit of a topologically regular point whose isotropy is isomorphic to the circle is called a \textit{circular orbit} or \textit{$C$-orbit}. We denote the set of points on $C$-orbits by $C$. Note that $C$ consists entirely of topologically regular points. The set of topologically regular points which are fixed points of the action (that is, having isotropy $\soso$) is denoted by $RF$. By definition $RF$ consists only of topologically regular points. Finally we relabel the set of topologically singular points (which by our previous analysis are fixed points of the action) by $SF$.

We collect all of this information in Table \ref{TBL:orbit-types}. \hfill


\begin{table}[ht]
\resizebox{\textwidth}{!}{%
\begin{tabular}{|c|c|c|c|c|}
\hline
\textbf{Orbit Type} & \textbf{Notation} & \textbf{Isotropy} & \textbf{Space of Directions} & \textbf{Comments} \\ \hline
Principal & $P$ & $\{1\}$ & \multirow{6}{*}{$S^3$} & Interior point in $X^\ast$. \\ \cline{1-3} \cline{5-5} 
\multirow{2}{*}{Exceptional} & \multirow{2}{*}{$E$} & $\mathbb{Z}_k$ &  & Interior point in $X^\ast$. \\
 &  & (contained in some $G(m,n)$) &  & Isolated from other $E$ orbits. \\ \cline{1-3} \cline{5-5} 
Circular & $C$ & $G(m,n)$ &  & Boundary point in $X^\ast$ \\ \cline{1-3} \cline{5-5} 
Regular & \multirow{2}{*}{$RF$} & \multirow{2}{*}{$\soso$} &  & Lateral isotropies $G(m,n)$ and $G(p,q)$ \\
Fixed Point &  &  &  & with $mq-np=\pm 1$ \\ \hline
Singular & \multirow{2}{*}{$SF$} & \multirow{2}{*}{$\soso$} & $L(q,p)$ & Lateral isotropies $G(a,b)$ and $G(c,d)$ \\
Fixed Point &  &  & ( $(q,p)\neq (1,0), (0,1)$ ) & with $ad-bc\neq 0,\pm 1$ \\ \hline
\end{tabular}%
}
\caption{Orbit types of a $\soso$-action on $X$}
\label{TBL:orbit-types}
\end{table}


In the following proposition we summarize our previous discussions and state immediate consequences obtained by putting together the analysis of the local structure of $X^\ast$ around orbits of topologically singular points and the analysis at topologically regular points obtained in \cite{OrlRayI, OrlRayII}. 

\begin{prop}
\label{P:Orbit-space-structure}
Let $X$ be a closed, orientable Alexandrov $4$-space with an effective, isometric $\soso$-action. Then the following hold:
\begin{itemize}
\item[(1)] The orbit space $X^\ast$ is a $2$-manifold (possibly) with boundary.
\item[(2)] The interior of $X^\ast$ consists of $P$-orbits and a finite number of $E$-orbits. 
\item[(3)] Each connected component of the boundary of $X^\ast$ consists of $C$, $RF$ and $SF$ orbits. 
\end{itemize}
\end{prop}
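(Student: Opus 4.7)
The plan is to deduce each of the three assertions by case analysis on the orbit type, feeding the Slice Theorem (Theorem~\ref{THM:SLICE}) into the local descriptions of $X^*$ assembled in the preceding sections. Recall that the Slice Theorem identifies a neighborhood of $x^*$ in $X^*$ with the cone $K(S_x^\perp)/G_x \cong K(S_x^\perp/G_x)$, so the question reduces to understanding $S_x^\perp/G_x$ for each of the five orbit types listed in Table~\ref{TBL:orbit-types}.

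For (1), I would simply recall that the orbit map $\pi : X \to X^*$ is a submetry, so $X^*$ is a compact Alexandrov $2$-space with a lower curvature bound; by \cite[Corollary~10.10.3]{BurBurIva} every such space is a topological $2$-manifold, possibly with boundary. This was already observed at the start of Section~\ref{S:Orbit space and Orbit types}, so nothing new is required here.

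For (2) and (3), I would assemble the local pictures case by case. For a $P$-orbit one has $G_x = \{1\}$ and $S_x^\perp \cong S^1$, so the local model is an open $2$-disk and $x^*$ is an interior point. For an $E$-orbit, $G_x \cong \ZZ_k$ acts freely on $S_x^\perp \cong S^1$ by rotation, so $S_x^\perp/G_x \cong S^1$ and the local model is again a disk with $x^*$ interior; moreover the only non-principal orbit inside this disk is $x^*$ itself, which shows that $E$-orbits are isolated in $X^*$. For a $C$-orbit, orientability of $X$ excludes $S_x^\perp \cong \RP^2$, so $S_x^\perp \cong S^2$ and $G_x \cong G(m,n)$ acts by rotation with quotient a closed interval, yielding a half-disk neighborhood and placing $x^*$ on $\partial X^*$. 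Finally, at an $RF$- or $SF$-orbit, the analyses of Subsection~\ref{Sub: Local structure of X^* around points in S_X^0} and Section~\ref{S:Local structure around topological regular points} show that $\Sigma_xX/\soso$ is a closed interval, and hence $x^*$ is a corner point of $\partial X^*$. Combining these five cases immediately gives (3) and identifies the interior of $X^*$ with the union of the $P$- and $E$-orbits.

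To complete (2) I still need finiteness of the set of $E$-orbits. This follows at once from the isolation observation above: the image $E^* \subseteq X^*$ is a closed discrete subset of the compact space $X^*$, hence finite. The only mildly subtle step in the whole argument is precisely this isolation, and it reduces to the elementary fact that $\ZZ_k$ acts freely on $S^1$ by rotation, so every orbit other than the central one in the slice is principal.
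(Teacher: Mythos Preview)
Your proposal is correct and follows essentially the same route as the paper: the paper presents this proposition explicitly as a summary of the local case analysis carried out in Sections~\ref{S:Orbit space and Orbit types} and~\ref{S:Local structure around topological regular points}, and your argument simply reassembles that same analysis (Slice Theorem plus the five orbit-type pictures in Table~\ref{TBL:orbit-types}) together with the compactness observation for finiteness of $E^*$. One cosmetic remark: calling the $RF$/$SF$ points ``corner points'' is slightly informal, since topologically the cone over an interval is still a half-disk and $X^*$ is a genuine $2$-manifold with boundary; the ``corner'' is only visible at the level of the weighted structure.
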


\section{Equivariant classification}
\label{S_Equivariant classification}

As done for smooth $\soso$-actions on $4$-manifolds (see \cite{OrlRayI,OrlRayII}) and $\mathrm{SO}(2)$-actions on $3$-manifolds (see \cite{Nun}), we may endow the orbit space $X^\ast$ with a set of ``weights" that encode its topology, the isotropy information of the action, and the topological regularity of the points in the orbit. In this section we prove that this set of weights characterizes the action. We list these invariants:
\begin{enumerate}[(i)]

\item For a closed, oriented Alexandrov $4$-space $X$ with an effective $\soso$-action by isometries we have by Proposition~\ref{P:Orbit-space-structure} that the orbit space $X^\ast$ is an oriented $2$-manifold of a certain genus possibly with boundary. The integer $g\geq 0$ denotes the genus of $X^\ast$. 
 
\item Fix an orientation for $\soso$. Recall that the $2$-manifold $X^\ast$ inherits an orientation from $X$ via the orbit projection map and vice versa. We denote such an orientation of $X^\ast$ by the symbol $\varepsilon$. Hence, $\varepsilon$ has two different possible values.
 
\item The integer $s\geq 0$ denotes the number of boundary components of $X^\ast$ which consist only of $C$-orbits. To each connected component of this type we associate the weight $\left\langle p_i,q_i\right\rangle$ corresponding to the isotropy subgroup $G(p_i,q_i)$ of the corresponding circular orbits.

\item The integer $t\geqslant 0$  is the number of boundary components of $X^\ast$ which have non-empty intersection with $RF^\ast\cup SF^\ast$. We label these boundary components from $1$ to $t$. On the $\ell$-th boundary component of this type, there are $r_\ell \geqslant 2$  fixed points, and the orbits  between two such fixed points are circular orbits with isotropy $G(a_{\ell,w},b_{\ell,w})$. We set the  weight $f_{\ell,w}$ to be the determinant of the isotropy groups $G(a_{\ell,w},b_{\ell,w})$ and $G(a_{\ell,w+1},b_{\ell,w+1})$, i.e. $f_{\ell,w} =a_{\ell,w}b_{\ell,w+1}-a_{\ell,w+1}b_{\ell,w}$, for $1\leq w\leq r_{\ell}-1$, and $f_{\ell,r_{\ell}} =a_{\ell,r_{\ell}}b_{\ell,1}-a_{\ell,1}b_{\ell,r_{\ell}}$. Therefore, for a fixed topologically regular point we have $f_{\ell,w} = \pm 1$, and for a fixed topologically singular point we have $f_{\ell,w}  \neq \pm 1 $, and $f_{\ell,w}  \neq 0$. Hence, we associate to the $\ell$-th boundary component the sequence of weights 
\begin{align}
\big( (a_\ell,b_\ell),f_\ell \big)= \big(&(a_{\ell,1},b_{\ell,1}),f_{\ell,1},(a_{\ell,2},b_{\ell,2}),\ldots ,\\
&(a_{\ell,(r_\ell-1)},b_{\ell,(r_\ell-1)}), f_{\ell,(r_\ell-1)}, (a_{\ell,r_\ell},b_{\ell,r_\ell}),f_{\ell,r_\ell} \big).\nonumber
\end{align}
If $r_{\ell}=2$, for some $1\leq \ell\leq t$, then we require that $f_{\ell, 1}=-f_{\ell, 2}$. This implies that for $r_{\ell}=2$, both fixed points are topologically singular, or they both are topologically regular.

\item Let $k\geqslant 0$ denote the number of exceptional orbits. To each exceptional orbit we associate the weight $(\alpha_l;\gamma_{l,1},\gamma_{l,2})$, the so-called \textit{oriented Seifert invariants} of the orbit (see \cite[Page 93]{OrlRayII} for the precise definition). 
\item In the case that $C\cup RF\cup SF=\emptyset$ we further associate another invariant. Consider $E^\ast=\{x_i^\ast\}_{i=1}^k$, and let $D_i^\ast$ be disjoint closed $2$-disks in $X^\ast$ centered at $x_i^\ast$ such that $D_i^\ast\setminus \{x_i^\ast\}\subset P^\ast$ for all $1\leq i \leq k$. We arbitrarily choose a $P$-orbit $x_0^\ast\in X^\ast\setminus \bigcup_{i=1}^k D_i^\ast$ and let $D_0^\ast$ be a small $2$-disk centered at $x_0^\ast$ fully contained in $P^\ast$.  Let $\chi:\bigcup_{i=0}^k \partial D_i^\ast \to X$ be a cross-section to the restriction of the action to  $\bigcup_{i=0}^k \partial D_i^\ast$. By standard Obstruction Theory this cross-section can be extended to $X^\ast \setminus \bigcup_{i=0}^k \mathrm{int}(D_i^\ast)$ and  the obstruction to extending it to $X^\ast\setminus \bigcup_{i=1}^k \mathrm{int}(D_i^\ast)$ is an element 
\[
(b_1,b_2)\in H^2\left(X^\ast, \bigcup_{i=1}^k D_i^\ast; \mathbb{Z}\oplus \mathbb{Z} \right)\cong \mathbb{Z}\oplus \mathbb{Z}.
\] 
We refer the reader to \cite[1.3]{OrlRayII} for a more detailed exposition of this invariant. 
\end{enumerate}

In sum, to every orbit space $X^\ast$ of an effective and isometric $\soso$-action on $X$ we associate the following set of invariants,
\begin{equation}
\label{EQ:INVARIANTS}
\big\{(b_1,b_2);\varepsilon;g; \big\{ \left\langle p_i,q_i\right\rangle \big\}_{i=1}^s; \big\{ \big( (a_{\ell},b_{\ell}),f_{\ell}\big) \big\}_{\ell = 1}^{t} ; \big\{ (\alpha_j;\gamma_{j,1},\gamma_{j,2}) \big\}_{j=1}^k \big\}.
\end{equation}


In the following we show that, as in the smooth case, this set of weights classifies not only the space $X$ up to equivariant homeomorphism, but also the action up to orientation. Some remarks are in order. 

\begin{rem}
If one of the sets $C$, $RF\cup SF$ or $E$ is empty, we denote it on the set of invariants by --.
\end{rem}

\begin{rem}
Observe that the set of invariants \eqref{EQ:INVARIANTS} reduces to the set of invariants given by Orlik and Raymond \cite{OrlRayI, OrlRayII} in the case that $X$ is homeomorphic to a topological manifold. Indeed, in this case,  $SF=\emptyset$, and $f_{\ell,w}=\pm 1$ for all values of $\ell$ and $w$, making the invariant $f_{\ell,w}$ superfluous.
\end{rem}

\begin{rem}
A boundary component of $X^\ast$ may contain an odd number of topologically singular fixed points.
In particular, it may contain a single topologically singular fixed point, as long as the total number of fixed points is at least three (see Example~\ref{EX: weighted projective space}, with $(m_1,n_1) = (1,0)$, $(m_2,n_2) = (0,1)$, and $(m_3,n_3) = (m,1)$, for $m\neq 0, \pm 1$.). 
This stands in contrast with the case of circle actions on $3$-dimensional closed Alexandrov spaces (cf. \cite[Lemma 3.3]{Nun} )
\end{rem}


\begin{figure}[h!]
\includegraphics[scale=0.5]{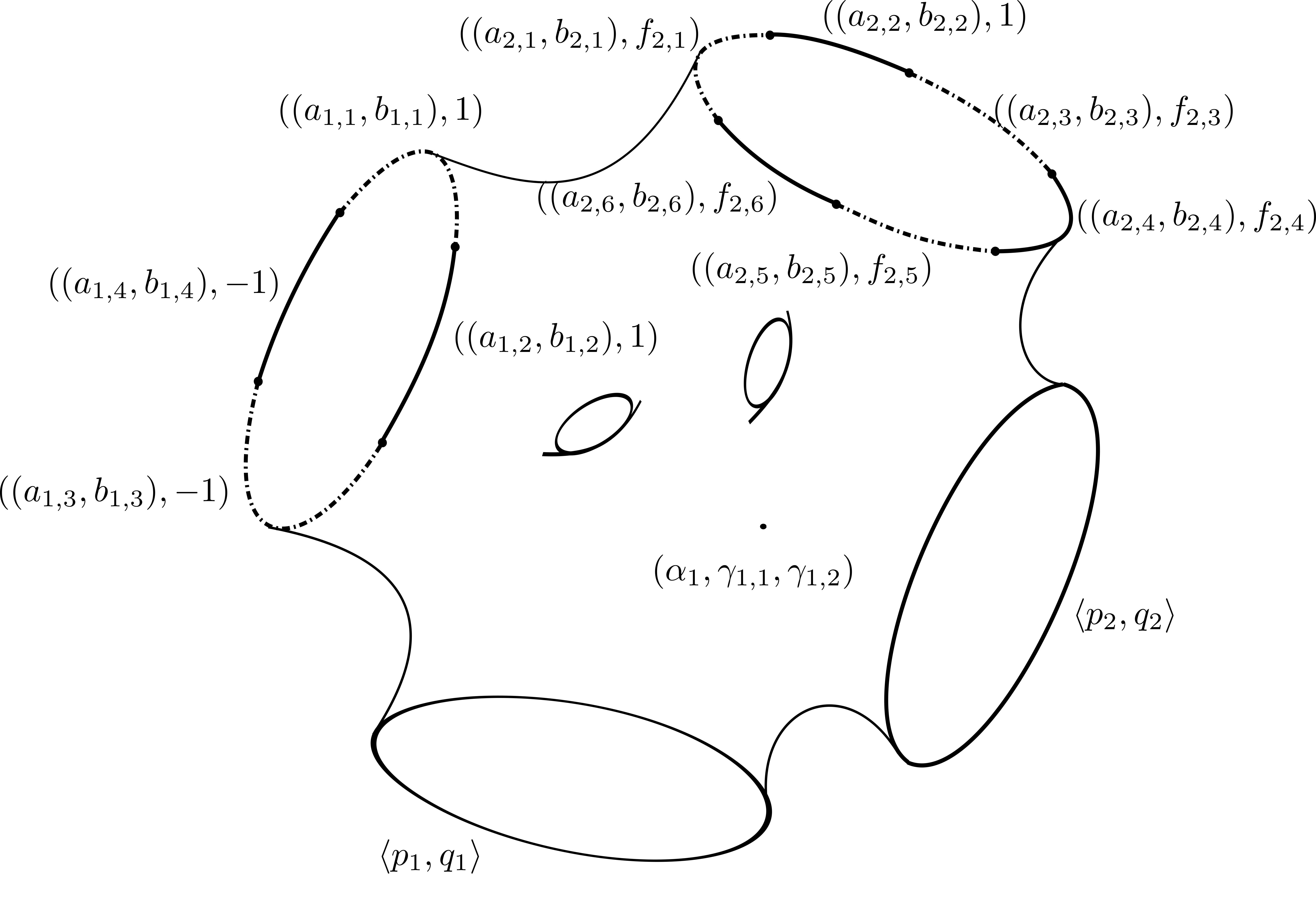}
\caption{Example of a weighted orbit space of an effective and isometric $\soso$-action on a closed Alexandrov $4$-space.}
\label{FIG:GENERIC_ORBIT_SPACE}
\end{figure} 



Let us now recall the definition of isomorphism between weighted orbit spaces.

\begin{definition}
Let $X$ and $Y$ be two closed, oriented Alexandrov $4$-spaces carrying effective and  isometric $T^2$-actions. We  say that the orbit spaces $X^\ast$ and $Y^\ast$ are \textit{isomorphic} if there exists an orientation preserving 
homeomorphism $f\colon X^\ast\to Y^\ast$, such that for each $x\in X^\ast$, the weight associated to $f(x^\ast)$ is the same as the weight associated to $x^\ast$. 
\end{definition}

\subsection{Cross-sectioning theorems}

As in the manifold case, a key step in achieving the equivariant classification of effective and isometric $\soso$-actions, is to show the existence of a cross-section to the quotient map $\pi: X\to X^\ast$ in the absence of exceptional orbits.

This has been done in  \cite[Appendix]{OrlRayI} for the more general family of  $4$-dimensional cohomology manifolds with a continuous $\soso$-action. As pointed out in Section~\ref{S:Local structure around topological regular points}, an orientable $4$-dimensional Alexandrov space $X$ with an effective and isometric $\soso$-action is a cohomology manifold (see also \cite[Corollary~8.7]{Mitsuishi}). However, we present a more detailed proof here given that we have a clearer picture of the local structure of $X$. 


We  first consider the simpler case when the orbit space is homeomorphic to a $2$-disk and proceed in a similar fashion to the proof of Theorem~1.10 in \cite{OrlRayI} for smooth closed $4$-manifolds with a smooth $\soso$-action, by splitting the proof on the existence of the cross-section into several lemmas. More precisely, we show how we can define a cross-section on simple subsets (in terms of the topology and isotropy information) and then show it is possible to extend any given cross-section on such subsets to a full cross section.

\begin{lem}[cf. \protect{\cite[Lemma~1.6]{OrlRayI}}]\label{L:Cross-section circle orbits}
Let $X$ be a compact Hausdorff space with an effective continuous $T^2$-action. Assume that the orbit space is homeomorphic to $I\times I$, where the orbits on the arc $[0,1]\times\{0\}$ all have isotropy $G(m,n)$. Then there exist a cross-section $\rho\colon X^\ast\to X$. Moreover, assume that over a connected subset $A^\ast$ of $J^\ast = \{0\}\times I \cup I\times\{1\}\cup\times\{1\}\times I$ we have defined a cross-section. Then this cross-section can be extended to a cross-section over all of $X^{*}$. 
\end{lem}

\begin{proof}
Let $G(a,b)$ be a circle subgroup of $T^2$ such that $an - bm = 1$, i.e. such that $T^2 = G(m,n)\times G(a,b)$. Consider $Y = X/G(m,n)$. Then $G(a,b)$ acts on $Y$ freely, and we have a principal circle fibration $Y\to X^\ast$, with fiber $G(a,b)$.  Since $X^\ast$ is contractible then $Y$ is equivariantly homomorphic to $X^\ast\times G(a,b)$ and there exist a cross-section $\rho_1\colon X^\ast\to Y$. Next, we observe that over $X^\ast\setminus I\times\{0\}$, we have a principal $G(m,n)\times G(a,b)$-bundle. Again, since $X^\ast\setminus I\times\{0\} $ is contractible, we conclude that over $X^\ast\setminus I\times\{0\} $ the bundle is trivial, i.e. the total space is $(X^\ast\setminus I\times\{0\} )\times G(m,n)\times G(a,b)$, and in particular we have a cross-section $\rho_2\colon Y \setminus (I\times\{0\} \times G(a,b) ) \to (X^\ast\setminus I\times\{0\} )\times G(m,n)\times G(a,b)\subset X$. We can extend this cross-section to all of $Y$ since over $I\times\{0\} \times G(a,b)$ we have only the fixed points of the $G(m,n)$-action on $X$. Thus, combining the cross-sections $\rho_1\colon X^\ast\to Y$ and $\rho_2\colon Y\to X$ we obtain the desired cross-section $\rho\colon X^\ast\to X$. We also conclude that $X$ is equivariantly homeomorphic to $X^\ast\times G(m,n)\times G(a,b)/\sim$, where  $((t,0),\zeta_1,\xi)\sim ((t,0),\zeta_2,\xi)$ for $t\in I$, $\zeta_1,\zeta_2\in G(m,n)$, and $\xi\in G(a,b)$.

Assume that we are given a cross-section  $\sigma\colon A^\ast\to X$. Since $X$ is equivariantly homeomorphic to $X^\ast\times G(m,n)\times G(a,b)/\sim$, we have a cross-section $\sigma\colon A^\ast\to X^\ast\times G(m,n)\times G(a,b)/\sim$. We prove that we can extend this cross-section to all of $X^\ast$. We fix $Z = X^\ast\times G(m,n)/\sim$, where $((t,0),\zeta_1)\sim  ((t,0),\zeta_2)$ for $t\in I$ and $\zeta_1, \zeta_2\in G(m,n)$. Observe that $Z$ is the orbit space of the $G(a,b)$-action on $X^\ast\times G(m,n)\times G(a,b)/\sim$. Thus, we have two projection maps $\pi_1 \colon X^\ast\times G(m,n)\times G(a,b)/\sim = X \to Z$ and $\pi_2 \colon Z\to X^\ast$. Moreover, the cross-section $\sigma\colon A^\ast \to X^\ast\times G(m,n)\times G(a,b)/\sim$ is of the form $\sigma(a^\ast) = [a^\ast,\zeta(a^\ast), \xi(a^\ast)]$. Observe that we have a cross-section $\sigma_2\colon A^\ast\to Z$ for $\pi_2$ defined as $\sigma_2(a^\ast) = [a^\ast,\zeta(a^\ast)]$. We fix $A_1 = \sigma_2(A^\ast)\subset Z$, and $A= \sigma(A^\ast)\subset X^\ast\times G(m,n)\times G(a,b)/\sim$. We obtain a cross-section $\sigma_1\colon A_1\to A$ for $\pi_1$, given by $\sigma_1[a^\ast,\zeta(a^\ast)] = [a^\ast,\zeta(a^\ast),\xi(a^\ast)]$. 

We show first that we can extend $\sigma_2$ to $J^\ast$. We assume that $A^\ast$ is  a proper subset of $J^\ast$. After removing the fixed points $\{(0,0), (0,1)\}$ from $J^\ast$, and possibly $A^\ast$, we get two spaces $J_1^\ast\subset J^\ast$ and $B^\ast\subset A^\ast$ which have the same homotopy type. Moreover, we have a circle fibration $\pi_2\colon \pi_2^{-1}(J_1^\ast)\to J_1^\ast$, since the action of $G(m,n)$ over $\pi_2^{-1}(J_1^\ast)$ is free. From the fact that $J_1^\ast$  has the same homotopy type as $B^\ast$, we conclude	 that $H^2(J_1^\ast,B^\ast;\ZZ)=0$, and thus the obstruction to extend $\sigma_2$ from $B^\ast$ to $J_1^\ast$ vanishes (see \cite[pp. 415--419]{Hatcher}). Therefore,  we can extend the map $\sigma_2\colon B^\ast\to Z$ to $J^\ast_1$. By adding back the fixed two fixed points to $J_1^\ast$, we get a cross-section $\sigma_2\colon J^\ast\to Z$ extending the  cross-section $\sigma_2\colon A^\ast\to A_1$. 

We show that we can extend this cross-section $\sigma_2\colon J^\ast \to Z$ to a cross-section $\sigma_2\colon X^\ast\to Z$. Consider the point $p^\ast = (1/2,0)\in X^\ast$. Observe that $X^\ast$ is homeomorphic to the cone of $J^\ast$ with vertex $p^\ast$, denoted by $C(p^\ast,J^\ast)$.  For $x^\ast\in J^\ast$ and $1\geqslant t>0$ we set $\sigma_2(x^\ast,t) = [(x^\ast,t),\zeta(x^\ast)]\in Z$, and at the vertex, $\sigma_2(p^\ast) = [p^\ast,\zeta]$ for any $\zeta\in G(m,n)$. 

Now we prove we can extend the cross-section $\sigma_1\colon A_1\to X$ to a cross-section $\sigma_1\colon Z\to X$. Recall that $\pi_1\colon X\to Z$ is a fibration with fiber a circle, since the action of $G(a,b)$ on $X$ is free. We point out that $A_1 = A^\ast\times G(m,n)/\sim$ has the same homotopy type as $Z = X^\ast\times G(m,n)/\sim$. From this it follows that $H^2(Z,A_1;\Z) =0$, and thus the only obstruction to extending  the given cross-section $\sigma_1\colon A_1\to  X^\ast \times G(m,n)\times G(a,b)/\sim$  to the whole of $Z$ vanishes (see \cite[pp. 415--419]{Hatcher}). Composing $\sigma_2$ with $\sigma_1$ we obtain  a cross-section $\sigma = \sigma_1\circ\sigma_2 \colon X^\ast\to X$ extending the given cross-section $\sigma\colon A^\ast\to X$.

\end{proof}

\begin{rem}\label{R: Cross-section with one corner fixed point}
Observe that in the previous proof, we can assume that the corner point $(0,0)\in X^\ast$ is a fixed point of the $T^2$ action on $X$, and we still recover the same conclusion: given a cross-section on $A^\ast$, we can extend this cross-section to $X^\ast$.
\end{rem}

\begin{lem}[cf. \protect{\cite[Lemma~1.8]{OrlRayI}}]\label{L:Section around fixed points}
Let $X$ be a compact Hausdorff space with an effective continuous action of $\soso$,  and assume that $X^\ast = [-1, 1] \times I$, where the orbits in  $[-1, 0)\times \{0\}$ have isotropy subgroup $G(m,n)$, the orbits in $(0, 1] \times \{0\}$ have isotropy subgroup $G(p,q)$ such that $mp-nq \neq 0$, the point $(0, 0)\in X^\ast$ is a fixed point,  and all other points correspond to principal orbits. Then there is a cross-section to this action. Moreover any cross-section on a connected subset $A$ of the arc $ J^\ast = \{-1\} \times I \cup [-1,1]\times \{1\}  \cup \{1\} \times I$ may be extended to a cross-section over $X^\ast$.
\end{lem}

\begin{proof}
We split the orbit space $X^\ast$ into two components: $X_1^\ast = [-1,0]\times I$, and $X_2^\ast = [0,1]\times I$. Assume without loss of generality that $A^\ast\cap X_1\neq \emptyset$. We then apply Remark~\ref{R: Cross-section with one corner fixed point} to obtain a cross-section $\sigma_1\colon X_1^\ast\to X$ extending the given cross-section $\sigma\colon A^\ast \cap X_1^\ast \to X_1$. We then apply Remark~\ref{R: Cross-section with one corner fixed point} to the cross-section $\sigma_2\colon \{0\}\times I \cup A^\ast\cap X_2^\ast \to X$, where $\sigma_2(0,t) = \sigma_1(0,t)$ for $t\in I$, and $\sigma_2(a^\ast ) = \sigma(a^\ast)$ for $a\in A^\ast\cap X_2^\ast$. Thus we obtain a continuous cross-section $\sigma\colon X^\ast \to X$ by setting $\sigma(x^\ast )= \sigma_1(x^\ast)$ for $x^\ast\in X_1^\ast$, and $\sigma(x^\ast )= \sigma_2(x^\ast)$ for $x^\ast\in X_2^\ast$
\end{proof}

Another lemma that we need to prove the existence of the cross section  is as follows:
\begin{lem}[\protect{\cite[Lemma~1.7] {OrlRayI}}]
\label{L:Section on Cylinder}
Suppose that $\pi\colon X\to X^{\ast}$ is the orbit map of a $\soso$-action on a compact Hausdorff space and assume that $X^{\ast} = \Ss^{1} \times  I$, where $\Ss^{1} \times \{0\} $ corresponds to orbits with isotropy group $G(m, n) $ with $\mathrm{gcd}(m, n)= 1$ and all other points to principal orbits. Then the map $\pi$
has a cross-section. Moreover any cross-section on $\Ss^{1} \times \{0\}$ may be extended to a cross-section over $X^{\ast}$.
\end{lem}

\begin{rem}
We observe that the hypotheses of both lemmas only  require the space to be Hausdorff, a condition which is satisfied by Alexandrov space since they are metric spaces.
\end{rem}

\begin{prop}\label{P: X^* a disk then there is a cross-section}
Let $X$ be a closed, orientable Alexandrov $4$-space with an effective, isometric $T^2$-action. If the orbit space $X^\ast$ is homeomorphic to a $2$-disk and $E=\emptyset$, then there exists a cross-section.
\end{prop}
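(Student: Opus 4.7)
The plan is to decompose the disk $X^*$ into sub-rectangles matching the hypotheses of Lemmas~\ref{L: lemma 1.6 Orlik} and~\ref{L: lemma 1.8 Orlik}, and to build a cross-section by successive local extensions. Since $E=\emptyset$, the interior of $X^*$ coincides with the principal stratum $P^*$, while $\partial X^*\cong S^1$ decomposes cyclically into $C$-arcs (each carrying a single circular isotropy) separated by a finite set $\{p_1,\dots,p_k\}$ of fixed points, of either $RF$ or $SF$ type. Because the two circular isotropies flanking any $p_j$ must differ, either $k=0$ or $k\geq 2$.

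When $k=0$, all of $\partial X^*$ carries a single circular isotropy $G(a,b)$. Choose a chord splitting $X^*$ into two half-disks $H_1,H_2$, and parametrize each $H_i$ as $I\times I$ so that the arc $I\times\{0\}$ covers half of $\partial X^*$ while $(I\times\{1\})\cup(\{0\}\times I)\cup(\{1\}\times I)$ covers the chord. The chord lies in $P^*$, so the principal $T^2$-bundle over it is trivial and admits a continuous cross-section; extend this cross-section to the arcs $(I\times\{1\})\cup(\{1\}\times I)\cup(I\times\{0\})$ of each $H_i$ using local triviality of the orbit projection at the $C$-orbits on $\partial X^*$, and then invoke Lemma~\ref{L: lemma 1.6 Orlik} to fill in each $H_i$.

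When $k\geq 2$, fix an interior basepoint $x_0^*\in X^*$ and, for each $j$, draw two disjoint interior arcs from $x_0^*$ to $\partial X^*$ landing on the two sides of $p_j$. These $2k$ arcs partition $X^*$ into $k$ wedge pieces $W_j$, each containing exactly one $p_j$ in the interior of its outer edge, flanked by two arcs of distinct circular isotropies, and $k$ slab pieces $S_j$, each with outer edge a single $C$-arc of constant circular isotropy. All internal arcs of the partition lie in $P^*$. Choose any cross-section on a small neighborhood of $x_0^*$ and extend it radially along every internal arc by triviality of the principal $T^2$-bundle. Then apply Lemma~\ref{L: lemma 1.8 Orlik} (together with Remark~\ref{L: Remark 1.9 Orlik}) to each wedge $W_j$ and Lemma~\ref{L: lemma 1.6 Orlik} to each slab $S_j$; the resulting extensions agree on overlaps because they extend the same cross-section on the $1$-skeleton.

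The main technical subtlety is the application of Lemma~\ref{L: lemma 1.8 Orlik} at an $SF$ fixed point, where the determinant of the flanking isotropies is neither $0$ nor $\pm 1$ and a neighborhood of $p_j$ in $X$ is modeled on a cone over a lens space rather than a $4$-ball. This is precisely the concern addressed in the paragraph preceding Proposition~\ref{P: X^* a disk then there is a cross-section}: the proof of Lemma~\ref{L: lemma 1.8 Orlik} in \cite{OrlRayI} is a purely transformation-group argument that never uses the relation $pn-qm=\pm 1$, so it applies verbatim to both $RF$ and $SF$ fixed points. The remaining compatibility checks at the overlaps of the decomposition are standard.
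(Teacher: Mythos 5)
Your overall strategy coincides with the paper's: cut the disk into rectangles meeting the hypotheses of the Orlik--Raymond cross-sectioning lemmas, prescribe a section over the principal part, and extend piece by piece, with the key shared observation that the proof of Lemma~\ref{L: lemma 1.8 Orlik} never uses $pn-qm=\pm 1$ and therefore applies at $SF$ points. The decompositions differ in detail. The paper takes an interior disk $Y^*$ (over which $\pi$ is a trivial principal $\soso$-bundle, supplying the initial section) together with an annular collar of $\partial X^*$ tiled by adjacent rectangles $D_i^*$, each containing exactly one fixed point together with the adjoining $C$-arcs; consequently only Lemma~\ref{L: lemma 1.8 Orlik} and Remark~\ref{L: Remark 1.9 Orlik} are needed, and the fixed-point-free case is disposed of by citing Case~1 of \cite[Theorem~1.10]{OrlRayI} outright. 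Your pie decomposition from a central basepoint is equally viable and has the mild advantage of making the inductive step more symmetric, but it forces you to treat the pure-$C$ ``slab'' pieces separately via Lemma~\ref{L: lemma 1.6 Orlik}.

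That is the one place where your citation does not quite deliver what you use. Lemma~\ref{L: lemma 1.6 Orlik} extends a section prescribed on the arc $A=(I\times\{1\})\cup(\{1\}\times I)\cup(I\times\{0\})$, in which the $C$-side sits at one \emph{end} of $A$; your slabs (and likewise the two half-disks in your $k=0$ case) need an extension of data prescribed on the two lateral sides \emph{plus} the $C$-side lying between them, a different configuration of three sides. With the lemma as stated, the extension is uncontrolled on the fourth side, so the assertion that ``the resulting extensions agree on overlaps because they extend the same cross-section on the $1$-skeleton'' does not follow: the slab extension need not restrict to the prescribed data on both radial arcs. The needed extension does exist (the same obstruction-theoretic argument goes through, and you must also remember to prescribe the section over the slab's outer $C$-arc compatibly with the radial data, since that arc belongs to $A$), but you should either invoke the appropriate refinement of Lemma~\ref{L: lemma 1.6 Orlik} explicitly --- in the spirit of Remark~\ref{L: Remark 1.9 Orlik} --- or reorganize the decomposition as the paper does, absorbing the $C$-arcs into the outer edges of the fixed-point rectangles so that Lemma~\ref{L: lemma 1.6 Orlik} is never needed. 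Your observation that the adjacent isotropies at any fixed point must differ (their determinant being nonzero for both $RF$ and $SF$ points), so that $k=1$ cannot occur, is correct and worth keeping.
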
 

\begin{proof}
We follow the same procedure as done in the proof of \cite[Theorem~1.10]{OrlRayI}. If $RF\cup SF=\emptyset$, then $X$ is homeomorphic to a $4$-manifold and the torus action in question has no fixed points. In this case, the result is exactly Case 1 of \cite[Theorem~1.10]{OrlRayI}. 

Therefore, we assume that $RF\cup SF\neq \emptyset$. The strategy to deal with this case follows along the lines of Case 2 of \cite[Theorem~1.10]{OrlRayI}. Let $RF^\ast\cup SF^\ast=\{ x_1^\ast,\ldots, x_r^\ast\}$. We split $X^\ast$ as $Y^\ast\cup D_1^\ast\cup\cdots \cup D_{r}^\ast$, where $Y^\ast$ is a closed $2$-disk contained in the interior of $X^\ast$, and each $D_{i}^\ast$ is homeomorphic to $[-1,1]\times I$, and satisfies:  $D_i^\ast\cap \left(RF^\ast\cap SF^\ast \right)=\{x_i^\ast\}$, and $D_i^\ast\cap Y^{\ast}$ corresponds to $[-1,1]\times \{1\}$. The set $D_i^\ast\cap D_{i+1}^\ast$ corresponds to $\{1\}\times I$ and the distribution of isotropies on $D_i^\ast$ is exactly as in Lemma \ref{L:Section around fixed points} (identifying $D_i^\ast\cap \partial X^\ast$ with $[-1,1]\times\{0\}$ and $x_i^\ast$ with $\{0\}\times\{0\}$).  
Since $Y^\ast$ does not contain orbits with non-trivial isotropy,  the restriction of the quotient map $\pi:\pi^{-1}(Y^\ast) \to Y^\ast$ is a principal $T^2$-bundle. Furthermore, since $Y^\ast$ is contractible, this principal bundle is trivial. Thus, there exists a cross-section $\eta:Y^\ast\to \pi^{-1}(Y^\ast)$. 

By considering the restriction of $\eta$ to $D_i^\ast\cap Y^{\ast}$  we fall into the hypotheses of Lemma \ref{L:Section around fixed points}. Hence, by this result, we can extend $\eta$ to $Y^\ast\cup D_1^\ast$. Slightly abusing the notation we do not rename the cross-section. Similarly, by Lemma \ref{L:Section around fixed points} and Remark~1.9 in \cite{OrlRayI}, the restriction of $\eta$ to $D_2^\ast\cap (Y^\ast\cup D_1^\ast)$ can be extended to a cross-section on $Y^\ast\cup D_1^\ast\cup D_2^\ast$. Proceeding inductively, it is clear that there exists an extension of $\eta$ to $X^\ast$, thus proving the result. 
\end{proof}

\subsection{Equivariant classification}

\begin{thm}
\label{T:Equivalent iff orbit spaces are isomorphic}
Let $X_1$ and $X_2$ be two closed, oriented Alexandrov $4$-spaces each with an effective, isometric $\soso$-action. Then $X_1$ and $X_2$ are equivariantly homeomorphic  if and only if $X_1^\ast$ and $X_2^\ast$ are isomorphic.
\end{thm}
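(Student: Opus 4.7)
The forward direction is immediate: any equivariant homeomorphism $\Phi\colon X_1\to X_2$ descends to a homeomorphism $\Phi^*$ of orbit spaces which preserves all isotropy groups (hence all weights in \eqref{EQ:INVARIANTS}), and, by construction of the orientation on $X^*$ from that of $X$ and a fixed orientation of $\soso$, preserves orientation as well.

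For the converse, let $\sigma^*\colon X_1^*\to X_2^*$ be an isomorphism of weighted oriented orbit spaces. The plan is to construct an equivariant lift $\sigma\colon X_1\to X_2$ in three stages. First, by the Slice Theorem \ref{THM:SLICE}, an equivariant tubular neighborhood $V_i^j$ of an exceptional orbit in $X_j$ is determined, up to equivariant homeomorphism, by the oriented Seifert invariants $(\alpha_i;\gamma_{i,1},\gamma_{i,2})$; since $\sigma^*$ preserves these weights, one obtains equivariant homeomorphisms $\sigma_i\colon V_i^1\to V_i^2$ lifting $\sigma^*$ on $(V_i^1)^*$. Second, on the complements $\hat{X}_j:=X_j\setminus\bigcup_i\mathrm{int}(V_i^j)$ there are two subcases. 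If $C\cup RF\cup SF\neq\emptyset$, then iterating Proposition \ref{P: X^* a disk then there is a cross-section} together with Lemmas \ref{L: lemma 1.6 Orlik} and \ref{L: lemma 1.8 Orlik} (the latter applied also at topologically singular fixed points, as permitted by the observation that its proof does not require $pn-qm=\pm 1$, together with Remark \ref{L: Remark 1.9 Orlik}) produces compatible cross-sections $\eta_j\colon \hat{X}_j^*\to\hat{X}_j$ that agree with $\sigma_i$ on the boundary circles $\partial(V_i^j)^*$. The formula $\hat{\sigma}(t\cdot\eta_1(x^*)):=t\cdot\eta_2(\sigma^*(x^*))$ then defines an equivariant homeomorphism $\hat{X}_1\to\hat{X}_2$. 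If instead $C\cup RF\cup SF=\emptyset$, then $\pi_j\colon\hat{X}_j\to\hat{X}_j^*$ is a principal $T^2$-bundle over a genus-$g$ surface with $k$ boundary circles on each of which $\sigma_i$ already prescribes a section; the obstruction to extending this boundary section to all of $\hat{X}_j^*$ is precisely the invariant $(b_1,b_2)\in H^2(\hat{X}_j^*,\partial\hat{X}_j^*;\mathbb{Z}\oplus\mathbb{Z})$, which coincides for $X_1$ and $X_2$ by hypothesis. Hence the two principal bundles are isomorphic via an isomorphism lifting $\sigma^*$ and matching the prescribed boundary sections, again yielding $\hat{\sigma}\colon\hat{X}_1\to\hat{X}_2$. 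Third, by construction $\hat{\sigma}$ agrees with $\sigma_i$ on $\partial V_i^1$, so they glue to the desired equivariant homeomorphism $\sigma\colon X_1\to X_2$.

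The main obstacle is globalizing the cross-sectioning argument in the first subcase. The cited lemmas produce cross-sections only on very simple pieces ($2$-disks with a prescribed pattern of boundary isotropy), whereas $\hat{X}_j^*$ can be a surface of arbitrary genus with several boundary components of mixed $C$-, $RF$-, $SF$-type. One must therefore choose a careful decomposition of $\hat{X}_j^*$ into such elementary pieces, fix an ordering in which to extend the cross-sections piece-by-piece, and invoke Remark \ref{L: Remark 1.9 Orlik} repeatedly to ensure compatibility both with the sections already defined on $\partial V_i^j$ (coming from Step 1) and with those on pieces treated at earlier stages of the induction. The principal novelty compared to the manifold case of Orlik--Raymond is the presence of topologically singular fixed points, which by Proposition \ref{P: S_X is finite collection of circle and points} are isolated but whose adjacent isotropy determinants $f_{\ell,w}$ can be arbitrary non-zero integers; fortunately, these are handled uniformly with topologically regular fixed points by Lemma \ref{L: lemma 1.8 Orlik}.
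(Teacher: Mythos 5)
Your proposal is correct and follows essentially the same route as the paper: remove equivariant neighborhoods of the exceptional orbits (whose models are determined by the Seifert invariants), build compatible cross-sections over the complement via Proposition \ref{P: X^* a disk then there is a cross-section} and the Orlik--Raymond cross-sectioning lemmas (with the $(b_1,b_2)$ obstruction governing the fixed-point-free case), lift the orbit-space isomorphism through these sections, and reattach the exceptional pieces, which glue in a unique equivariant way. The only cosmetic difference is that the paper also peels off annular neighborhoods of the boundary components and extends the section to them separately, whereas you fold that step into the general cross-sectioning argument.
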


\begin{proof}
The ``only if" part is clear. Therefore, we focus on showing that if $X_1^\ast$ is isomorphic to $X_2^\ast$, then $X_1$ is equivariantly homeomorphic to $X_2$. To this end, we begin by observing that if $SF_i=\emptyset$, then $X_i$ are homeomorphic to $4$-manifolds. In this case the proof is as in \cite{OrlRayI, OrlRayII}. Hence, in the following we assume $SF_i\neq \emptyset$. Suppose there is an isomorphism $h^\ast\colon X_1^\ast\to X_2^\ast$  of orbit spaces. We also consider the projection maps $\pi_1\colon X_1\to X_1^\ast$ and $\pi_2\colon X_2\to X_2^\ast$. Assume there are exceptional orbits $x_1^\ast,\ldots,x_k^\ast$ in the interior of $X_1^\ast$.  Since all the topologically singular points lie in the boundary of $X_1^\ast$, this exceptional orbits are topologically regular points, and thus via the slice theorem we can find small tubular neighborhoods $D_s\subset X$ such that $D_s^\ast\smallsetminus \{x_s^\ast\}$  and $h^\ast(D_s^\ast)\smallsetminus \{h^\ast(x^\ast)\}$ contain only principal orbits. Since $H^2(\partial D_s^\ast,\ZZ\oplus\ZZ)=0$ and $H^2(\partial h^\ast(D_s^\ast),\ZZ\oplus\ZZ)=0$, we have  cross-section  on the boundaries  of $D_s^\ast$ and $h^\ast(D^\ast_s)$. Furthermore, assume the boundary of $X_1^\ast$ consist of $m$ connected components. Set $U_j^\ast$ to be a closed annular neighborhood of the $j$-th component. 

Then for $X_{1,1}=X_1\setminus (D_1^\ast\cup\cdots\cup D_k^\ast\cup U_1^\ast\cup\cdots\cup U_m^\ast)$, and $X_{1,2} =  X_2\setminus (h^\ast(D_1^\ast)\cup\cdots\cup h^\ast(D_k^\ast)\cup h^\ast(U_1^\ast)\cup\cdots\cup h^\ast(D_m^\ast))$ from the  proof of Theorem~1.10 in \cite{OrlRayI}, the sections on each $\partial D_s^\ast$  can be extended to a section $\sigma_1\colon X_{1,1}^\ast\to X_{1,1} = \pi_1^{-1} (X^\ast_{1,1})$. The same is true for the sections over the boundaries of $h^\ast(D_s^\ast)$; we can extend them to a cross-section $\sigma_2\colon X_{1,2}^\ast\to X_{1,2} = \pi_2^{-1}( X_{1,2}^\ast)$. Since the action of $T^2$ over $X_{1,1}$ and $X_{1,2}$ is free, we conclude that $X_{1,1}$ is equivariantly homeomorphic to $X_{1,1}^\ast\times T^2$, and $X_{1,2}$ is equivariantly homeomorphic to $X_{1,2}^\ast\times T^2$. Thus we have an equivariant homeomorphism $h\colon  X_{1,1}^\ast\times T^2 \to X_{1,2}^\ast\times T^2$ given by $h(x^\ast,\xi,\zeta) = (h^\ast(x^\ast), \xi,\zeta)$.

The sections $\sigma_1$ and $\sigma_2$ can be extended over each annulus $U_j^\ast$, and $h^\ast(U_j^\ast)$ by Theorem~1.10 in \cite{OrlRayI} and Proposition~\ref{P: X^* a disk then there is a cross-section}. Consider $x^\ast\in U_j^\ast$. Observe that for any $y\in \pi_1^{-1}(x^\ast)$, there exists and element $(\xi, \zeta)\in T^2$, such that $y = (\xi, \zeta)\sigma_1(x^\ast)$; the same is true for $h(x^\ast)\in h^\ast(U_j^\ast)$. Set $X_{2,1}^\ast = X_1^\ast\smallsetminus (D_1^\ast\cup \cdots D_k^\ast)$, $X_{2,1} = \pi_1^{-1}(X_{2,1}^\ast)$, and $X_{2,2} = X_2\smallsetminus (D_1^\ast\cup \cdots D_k^\ast)$, $X_{2,2} = \pi_2^{-1}(X_{2,2}^\ast)$. We can extend the equivariant homeomorphism $h$, to an equivariant homeomorphism $h\colon X_{2,1}\to X_{2,2}$ as follows: for $(\xi,\zeta)\sigma_{1}(x^\ast)\in U_j= \pi_1^{-1}(U_j^\ast)$ we set $h((\xi,\zeta)\sigma_{1}(x^\ast)) =  (\xi,\zeta)\sigma_2(h^\ast(x^\ast))$.

To finish the proof, we have to extend the equivariant homeomorphism $h$ to each $D_s$. We recall that the actions on each $D_s$ are completely understood (see \cite[Section~1.1]{OrlRayII}). In this sense, there is a unique equivariant way to attach each of them to $X_{2,1}$ and $X_{2,2}$. Therefore, we have an equivariant homeomorphism $h\colon X_1\to X_2$ lifting the given isomorphism $h^\ast$. 
\end{proof}


\section{Construction of an Alexandrov metric}
\label{S:Construction_Alexandrov_metric}

We have so far showed that two closed, oriented Alexandrov $4$-spaces with an effective and isometric $T^2$-action which have the same set of invariants must be equivariantly homeomorphic. To complete the equivariant classification we show that given an arbitrary set of invariants as in \eqref{EQ:INVARIANTS}, there exists a closed orientable Alexandrov $4$-space $X$ and an effective and isometric $T^2$-action on $X$ which has precisely this set of invariants. More precisely, we assume that we are given a compact $2$-dimensional topological manifold endowed with a set of invariants as in Section~\ref{S_Equivariant classification}. Then  using a result of Haefliger and Salem 
we construct a smooth orbifold with a smooth $\soso$-action giving rise to the given weighted orbit space.   From this we construct an invariant Riemannian orbifold metric with curvature bounded below. 

Since it is a central tool in this section, for the sake of completeness, we recall the result of Haefliger and Salem. 

\begin{prop}[\protect{\cite[Proposition~4.5]{HaSa}}]
\label{T:Haefliger-Salem-orbifolds}
Let $W$ be a paracompact  space and $\{V_i\}_{i\in I}$ an open covering of $W$ such that for each $i\in I$, there exists a $T^{n}$-orbifold $X_i$ with orbit space equal to $V_i$. Assume the following compatibility condition: Let $\pi_i:X_i\to V_i$ be the projection map of each $T^{n}$-action. For each $w\in V_i\cap V_j$, there is an open neighborhood $V$ of $w$ in $V_i\cap V_j$ and a $T^n$-equivariant diffeomorphism of $\pi_i^{-1}(V)$ onto $\pi_j^{-1}(V)$ over $V$. Then to these compatible local data there is an associated  cohomology class in $H^{3}(W,\mathbb{Z}^n)$ whose vanishing is equivalent to the existence of a $T^n$-orbifold $X$ with orbit space $W$, such that $\pi^{-1}(V_i)$ is locally equivalent to $X_i$, where $\pi:X\to W$ is the projection map. 
\end{prop}

In the following definition we make explicit the restrictions that the a given set of weights needs to satisfy in order to be considered the set of invariants of an $\soso$-action on a $4$-dimensional orbifold.

\begin{definition}
Consider  $X^\ast$, a compact topological $2$-manifold (possibly with boundary) which has associated to it the  following set of weights:
\begin{equation}\label{EQ:GIVEN_SET_OF_INVARIATNS}
\big\{(b_1,b_2);\varepsilon;g; \big\{ \left\langle p_i,q_i\right\rangle \big\}_{i=1}^s; \big\{ \big( (a_{\ell},b_{\ell}),f_{\ell}\big) \big\}_{\ell=1}^{t} ; \big\{ (\alpha_j;\gamma_{j,1},\gamma_{j,2}) \big\}_{j=1}^k \big\}.
\end{equation}

We say that $X^\ast$ is \emph{legally weighted} if the determinant of two adjacent weights, $(a,b)$ and $(c,d)$, in $X^\ast$ is non-zero, i.e. $ad-bc\neq 0$. 
\end{definition}

\begin{rem}
The condition of being legally weighted is required to get, at a fixed point, as space of directions a suspension of a spherical space. In other words to exclude $S^2\times S^1$ as space of directions (see \cite{Neu}).
\end{rem}

\begin{rem}
Let $X^\ast$ be homeomorphic to a $2$-disk with two fixed points: one, topologically regular and the other, topologically singular. In other words we have the following weight information:
\[
	\big\{(\varepsilon, 0,0,0,1);\big\{\big((a,b),\delta,(c,d),1\big)\big\}\big\},
\]
where $\delta\neq \pm 1$. By Section~\ref{S_Equivariant classification}, Item (iv), we  exclude such weighted orbit space.
\end{rem}

\begin{rem}\label{R: Orbit spaces of Alexandrov spaces are leg. weighted}
Observe that if $X$ is a closed orientable $4$-dimensional Alexandrov space with an effective $\soso$-action by isometries, then the orbit space $X^\ast$ with the set of invariants described in Section~\ref{S:Orbit space and Orbit types}, is a legally weighted space.
\end{rem}

The following theorem shows that legal sets of weights are realized by orbifolds. 

\begin{thm}\label{T: existence of orbifold}
Let $X^\ast$ be the legally weighted topological $2$-manifold with weights as in \eqref{EQ:GIVEN_SET_OF_INVARIATNS}. Then there exists an orbifold $\mathcal{O}_X$ with a $\soso$-action, such that $\mathcal{O}_X / \soso$ is isomorphic to $X^\ast$.
\end{thm}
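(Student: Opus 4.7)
The plan is to apply \cite[Proposition~4.5]{HaSa}, which constructs a smooth torus orbifold from the combinatorial data of a weighted $2$-orbit space, and to check that our invariants \eqref{EQ:GIVEN_SET_OF_INVARIATNS} provide exactly such data. More concretely, one constructs $\mathcal{O}_X$ by first realizing each distinguished orbit type as a local smooth $\soso$-orbifold chart, and then gluing the charts over the generic locus of principal orbits.

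The local models are standard. Over principal orbits one uses the trivial chart $U \times T^2$ with $T^2$ acting on the second factor; over an exceptional orbit with Seifert invariants $(\alpha_j;\gamma_{j,1},\gamma_{j,2})$ one uses the weighted model $(D^2 \times T^2)/\ZZ_{\alpha_j}$ as in \cite{OrlRayII}; along a boundary arc with circle isotropy $G(p,q)$ one uses the slice model $K(S^1) \times T^2$ where $G(p,q)$ collapses the cone fibre; and at a boundary fixed point with adjacent circle isotropies $G(a,b)$, $G(c,d)$ and determinant $f=ad-bc$, one uses the orbifold cone over the lens space $L(|f|;\ast)$ equipped with the cohomogeneity-one $T^2$-action having group diagram $(T^2, G(a,b), G(c,d), \{1\})$. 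Legal weighting, which enforces $f\neq 0$, is precisely what makes this last cone well-defined as a $4$-orbifold, while $|f|>1$ produces the topologically singular vertex described in Subsection~\ref{Sub: Local structure of X^* around points in S_X^0} and $|f|=1$ recovers the usual smooth case.

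These charts are to be glued over the generic stratum via its principal $T^2$-bundle structure; in the presence of any boundary component or fixed point, the generic stratum has the homotopy type of a graph, so the gluing is dictated uniquely up to equivariant isomorphism by the weights on the adjacent boundary arcs. In the remaining case $C\cup RF\cup SF=\emptyset$, the generic stratum equals $X^*\setminus\bigcup_i\mathrm{int}(D^*_i)$, and $\mathcal{O}_X$ is to be constructed as the principal Seifert $T^2$-bundle whose class relative to the standard trivializations on $\bigcup_i\partial D^*_i$ is the prescribed invariant $(b_1,b_2)\in H^2(X^*, \bigcup_i D^*_i;\ZZ\oplus\ZZ)$, via standard obstruction theory.

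The main technical obstacle is the existence of the orbifold cone model at each topologically singular fixed point: one needs the lens space $L(|f_{\ell,w}|;\ast)$ to be realizable as the link of the vertex with exactly the specified pair of lateral isotropies. This is guaranteed by the classification of cohomogeneity-one $T^2$-actions on closed $3$-manifolds of \cite{Neu}, the same input used to analyze topologically singular fixed points in Subsection~\ref{Sub: Local structure of X^* around points in S_X^0}. Once $\mathcal{O}_X$ is assembled in this way, the isomorphism $\mathcal{O}_X/\soso \cong X^*$ of weighted orbit spaces is a direct chart-by-chart verification.
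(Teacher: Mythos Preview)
Your proposal is correct and follows essentially the same route as the paper: invoke \cite[Proposition~4.5]{HaSa} by exhibiting local $\soso$-orbifold charts over a cover of $X^*$, with the cone over a lens space serving as the model at each singular fixed point (legality $f\neq 0$ ensuring this is a genuine orbifold chart), and verify compatibility on overlaps.

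The organization differs slightly. You decompose $X^*$ into charts for each orbit type separately (principal, exceptional, $C$-arc, fixed point), whereas the paper groups the entire ``manifold part'' $Y^*$---everything away from collar neighborhoods of the boundary components carrying $SF$-orbits---into a single chart realized directly via the Orlik--Raymond classification, and then only introduces the $Z_{ij}^*$ charts around individual $SF$ fixed points. This packaging lets the paper delegate the handling of exceptional orbits, the obstruction class $(b_1,b_2)$, and the $RF$ fixed points entirely to \cite{OrlRayI,OrlRayII}, so that the only new work is the explicit lens-space construction and the compatibility maps on $Z_{ij}^*\cap Y^*$ and $Z_{ij}^*\cap Z_{ik}^*$, which the paper writes out by hand. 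Your higher-level argument (the generic stratum retracts to a graph, so the principal bundle trivializes and the gluing is unobstructed) is a valid substitute for that explicit verification, though it sweeps under the rug precisely the bookkeeping that the paper regards as the content of the proof.
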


\begin{proof}
We begin by pointing out that if $SF^\ast=\emptyset$ then the result follows directly from \cite{OrlRayI, OrlRayII}, and in this case $\mathcal{O}_X$ is in fact a smooth $4$-manifold. Thus, in the following we assume that $SF\neq \emptyset$.  To prove the result in this case we construct an open cover of $X^\ast$ that satisfies the hypothesis of \cite[Proposition~4.5]{HaSa}: we give an open cover $\{V^\ast_i\}$ of $X^\ast$, such that for each open set $V^\ast_i$, there exists an orbifold $V_i$ with a smooth $\soso$-action, projection map $\pi_i\colon V_i\to V_i/\soso$, such that $V_i^\ast$ is the orbit space of the action, and for $w^\ast\in V_i^\ast\cap V_{j}^\ast$, there exists an open neighborhood $V^\ast\subset V_i^\ast\cap V_j^\ast$ of $w^\ast$, and an equivariant diffeomorphism between $\pi_{i}^{-1}(V^\ast)$ and $\pi_{j}^{-1}(V^\ast)$.

Recall that $t+s$ is the total number of boundary components of $X^\ast$.  Let us write $\partial X^\ast = (\sqcup_{i=1}^q \partial X^\ast_i)\sqcup (\sqcup_{j=q+1}^{t+s}\partial X^\ast_j)$ for some $q\geq 0$ in such a way that for $q+1 \leqslant j \leqslant t+s$, the boundary component $\partial X^\ast_j$ contains  at least one $SF$-orbit, and for $1 \leqslant i \leqslant q$, the boundary  component $\partial X^\ast_i$  contains no $SF$-orbit (i.e., it contains only $C$- and $RF$-orbits). Here, we adopt the convention that if $C^\ast,RF^\ast=\emptyset$ then $q=0$ and $(\sqcup_{i=1}^q \partial X^\ast_i)=\emptyset$.

Consider the following open cover for $X^\ast$. Assume that $W_i^\ast$ and $U_{i}^\ast$,  $q+1\leq i\leq t+s$, are annular neighborhoods around  each boundary component with $SF^\ast$-points, such that $\bar{U}_{i}^\ast$  is properly contained in $W_i^\ast$, and $W_i^\ast\cap W_j^\ast=\emptyset$, for all $i\neq j$. Let us denote $Y^\ast =X^\ast\setminus {\bigcup_{i=q+1}^{t+s} \bar{U}_i^\ast} $. Note that  $Y^\ast$ is a $2$-surface of genus $g$ with boundary. We now divide each open neighborhood $W_i^\ast$ into smaller neighborhoods $Z^\ast_{ij}$, where each $Z^\ast_{ij}$ is required to contain only  a single fixed point, and $\bigcup_j Z_{ij}^\ast=W_i^\ast$ (see Figure~\ref{FIG_OPEN_COVER}). 


\begin{figure}
\centering
\def\svgwidth{0.50\columnwidth} 
\includegraphics[scale=0.4]{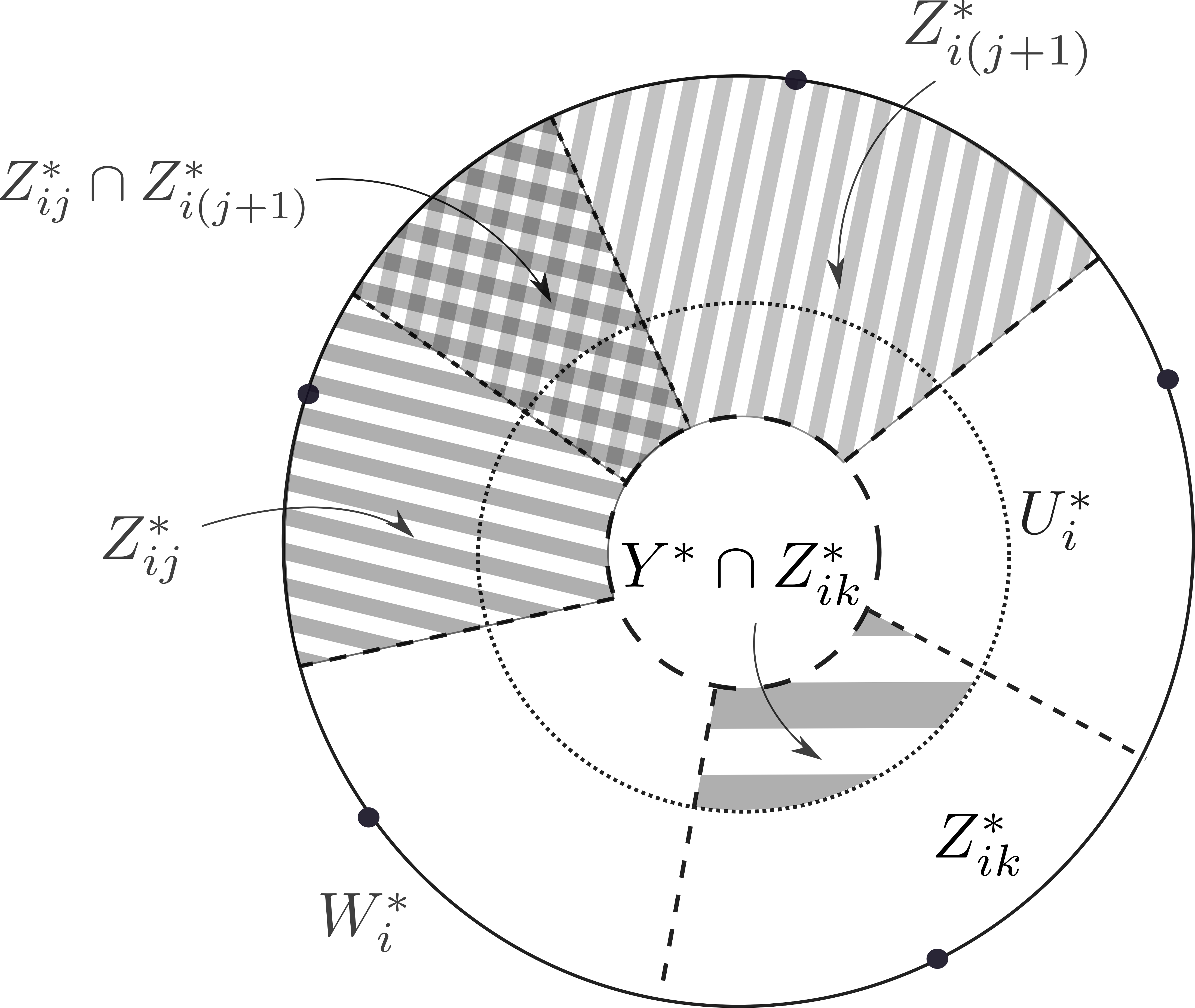}
\caption{Open cover of $X^\ast$.}
\label{FIG_OPEN_COVER}
\end{figure} 


Now we  construct smooth $4$-orbifolds equipped with effective $\soso$-actions over each of the pieces of the open cover $\{Y^\ast\}\cup(\cup_{ij}\{Z_{ij}^\ast\}$, and show that on the non-trivial intersections they are compatible in the sense of Proposition~\ref{T:Haefliger-Salem-orbifolds}.

\subsection*{Orbifold corresponding to $\mathbf{Y^*}$.} We point out that the number of boundary components in $X^\ast$ containing only $RF$-orbits is given by $q-s$ in the notation above. With this observation we point out that  by \cite{OrlRayI,OrlRayII}, the set of invariants 
\[
\big\{(b_1,b_2);\varepsilon;g; \big\{ \left\langle p_i,q_i\right\rangle \big\}_{i=1}^{s}; \big\{ \big((a_{l},b_{l}),\pm 1\big)  \big\}_{l=1}^{q-s} ; \big\{ (\alpha_j;\gamma_{j,1},\gamma_{j,2}) \big\}_{j=1}^k \big\}
\]
determines a unique (up to equivariant homeomorphism) closed, smooth $4$-manifold $\hat{Y}$ with an effective $\soso$-action having these invariants. Note that $\hat{Y}^\ast$ has exactly $q\geq 0$, boundary components: $s$ boundary components containing only circular isotropy; and $q-s$ boundary components containing at least one regular fixed point. Recall that $t$ is the number of boundary components of $X^\ast$ containing fixed points. The number of boundary components containing $SF$-orbits is given by  $t+s-q$. Let $\hat{Y}_1^\ast$  be  the subset of $\hat{Y}^\ast$ obtained by taking out sufficiently small, disjoint, closed  neighborhoods of $t+s-q$ (arbitrary) points with trivial isotropy. Then $\hat{Y}_1^\ast$ is isomorphic to $Y^\ast$. Set $Y$ to be  the inverse image of $\hat{Y}_1^\ast$ under the projection map $\hat{Y}\to \hat{Y}^\ast$, associated to the $\soso$-action on $\hat{Y}$. Notice that $Y$ is an  open submanifold of $\hat{Y}$ which is invariant under the  action of $\soso$.

\subsection*{Orbifolds corresponding to $\mathbf{Z_{ij}^*}$.} Let $x_{ij}^\ast\in Z_{ij}^\ast$ be the fixed point in this neighborhood  with  isotropies $G(a_{ij},b_{ij})$ and $G(a_{i(j+1)},b_{i(j+1)})$, with $a_{ij}b_{i(j+1)}-a_{i(j+1)}b_{ij}=f_{ij}$. 
From Section~\ref{Sub: Local structure of X^* around points in S_X^0} and Section~\ref{Sub: Regular Fixed points}, we know that the open cone over a lens space has an $\soso$-action such that the orbit space is isomorphic to $Z_{ij}^\ast$. We  point out that  the cone over a lens space admits an orbifold structure.  We denote such a cone by $Z_{ij}$.


\subsection*{Compatibility of the open cover}
We now show that over the overlaps in $X^\ast$, the smooth $4$-orbifolds we have produced are equivariantly diffeomorphic. 
That is, we need to examine the intersection of $Z_{ik}^\ast$ with $ Y^\ast$, and the intersection of $Z_{ij}^\ast$ with $Z_{i(j+1)}^\ast$, for $j =1,\ldots, r_i$ with the identification $r_i+1 = 1$.

In the first case, the intersection $Z_{ik}^\ast \cap Y^\ast$ is the product of two open intervals $I_1\times I_2$ (see Figure~\ref{FIG_OPEN_COVER}). Set $A = \pi^{-1}_{Y}(I_1\times I_2)\subset Y$. Observe that $A$ is contained in a smooth manifold, and the action of $\soso$ on $A$ is free. Since $A/\soso = I_1\times I_2$ is contractible, we conclude that $A$ is equivariantly diffeomorphic to $\soso\times I_1\times I_2$ via a diffeomorphism $\psi_A\colon \soso\times I_1\times I_2\to A$. Analogously we point out, that  $B = \pi^{-1}_{Z_{ik}}(I_1\times I_2)\subset Z_{ik}$ is a manifold,  and the action of $\soso$ on $B$ is free. Since $B/\soso = I_1\times I_2$ is contractible, we conclude that there exists an equivariant diffeomorphism $\psi_{B}\colon B \to \soso  \times I_1\times I_2$. Therefore, there exists an equivariant diffeomorphism $\Psi\colon B\to A$ defined as $\Psi = \psi^{-1}_A\circ\psi_B$.

We now focus ourselves on $Z_{ij}^\ast\cap Z_{i(j+1)}^\ast$. We can consider the closure of $Z_{ij}^\ast\cap Z_{i(j+1)}^\ast$ in $X^\ast$, and since $Z_{ij}$ and $Z_{i(j+1)}$ are open cones over lens spaces, we can  then assume $Z_{ij}$ and $Z_{i(j+1)}$ to be the closed cones.  The intersection of the closures is homeomorphic to $[0,1]\times[0,1]$. Moreover, the edge $[0,1]\times\{0\}$ consists of orbits with isotropy $G(a_{i(j+1)},b_{i(j+1)})$, for both actions of $\soso$ on $Z_{ij}$ and $Z_{i(j+1)}$ (see Figure~\ref{FIG_OPEN_COVER}). Then by the proof of Lemma~\ref{L:Cross-section circle orbits}, the subset $A =\pi^{-1}_{Z_{ij}}([0,1]\times[0,1])\subset Z_{ij}$ is equivariantly diffeomorphic to $[0,1]\times K(S^1)\times S^1$, where $K(S^1)$ is the closed cone over $S^1$. This same statement holds for $B =\pi^{-1}_{Z_{i(j+1)}}([0,1]\times[0,1])\subset Z_{i(j+1)}$: it is equivariantly diffeomorphic to $[0,1]\times K(S^1)\times S^1$. Thus we conclude that $A$ is equivariantly diffeomorphic to $B$.

Thus the conditions of compatibility in Proposition~\ref{T:Haefliger-Salem-orbifolds} are fulfilled and this procedure gives rise to a cohomology class in $H^3(X^\ast, \Z^2)$. Since $X^\ast$ is a $2$-dimensional manifold, the group  $H^3(X^\ast, \Z^2)$ vanishes, and the conclusion of  Proposition~\ref{T:Haefliger-Salem-orbifolds} gives us  a compact, smooth $4$-orbifold $X$ (possibly with boundary) carrying an effective $\soso$-action which has exactly the collection
\[
\big\{(b_1,b_2);\varepsilon;g; \big\{ \left\langle p_i,q_i\right\rangle \big\}_{i=1}^s; \big\{ \big( (a_{\ell},b_{\ell}),f_{\ell}\big)\big\}_{\ell=1}^{t} ; \big\{ (\alpha_j;\gamma_{j,1},\gamma_{j,2}) \big\}_{j=1}^k \big\}
\]
as its set of invariants. That is, $X/\soso$ is isomorphic to $X^\ast$. 
\end{proof}

Thus for a weighted surface $X^\ast$, we have constructed a space $X$ (the underlying space of a smooth $4$-orbifold) which admits a $\soso$-action, and realizes $X^\ast$ as its orbit space. Now, we point out that this space $X$ admits an Alexandrov metric.

\begin{thm}\label{T: Each legally weighted 2-space represents an Alex space}
For each legally weighted surface $X^\ast$, there exists a closed, orientable Alexandrov $4$-space admitting an effective and isometric $\soso$-action realizing $X^\ast$ as its orbit space.
\end{thm}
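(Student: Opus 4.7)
The plan is to upgrade the smooth $4$-orbifold $\mathcal{O}_X$ constructed in Theorem \ref{T: existence of orbifold} to a Riemannian orbifold carrying a $\soso$-invariant metric whose underlying length space is an Alexandrov $4$-space. I would proceed in three steps.

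First, I would equip $\mathcal{O}_X$ with a $\soso$-invariant smooth orbifold Riemannian metric. Each local piece in the construction of Theorem \ref{T: existence of orbifold} --- namely, the smooth invariant manifold $Y$ and the cones $Z_{ij}=K(L(r,s))$ --- admits a natural smooth Riemannian metric compatible with the local smooth $\soso$-action defined there. A partition of unity on $X^*$ subordinate to the open cover $\{Y^*, \{Z_{ij}^*\}\}$ pulls back to an equivariant partition of unity on $\mathcal{O}_X$, allowing one to glue these local metrics into a global smooth orbifold Riemannian metric. Averaging against the Haar measure on the compact group $\soso$ then yields a $\soso$-invariant Riemannian orbifold metric $g$.

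Second, I would show that the underlying topological space $X$ of $(\mathcal{O}_X,g)$, equipped with the induced length metric, is an Alexandrov $4$-space. Locally $X$ is isometric to a quotient $\tilde U/\Gamma$ of an open subset of a Riemannian $4$-manifold by a finite group $\Gamma$ acting by isometries; such quotients are locally Alexandrov spaces and inherit any lower sectional curvature bound of $\tilde U$. Compactness of $X$ provides a uniform lower bound $k\in\RR$ for the sectional curvature on the smooth part, so $X$ satisfies the local Alexandrov condition with $\curv\geq k$. Toponogov's globalization theorem for length spaces (see \cite{BurBurIva}) then upgrades this to a global curvature bound, making $X$ an Alexandrov $4$-space with $\curv\geq k$.

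Finally, I would verify the remaining properties. Effectiveness of the $\soso$-action on $X$ and the identification of $X/\soso$ with $X^*$ as weighted surfaces are inherited directly from Theorem \ref{T: existence of orbifold}, while the invariance of $g$ makes the action isometric. Orientability of $X$ follows from Theorem \ref{T_Orientability} applied to the smooth manifold part of $X$, together with the observation that the gluing diffeomorphisms used in Theorem \ref{T: existence of orbifold} are orientation preserving with respect to the orientation $\varepsilon$ fixed on $X^*$ and the chosen orientation on $\soso$. I expect the main obstacle to be the compatibility of the local Riemannian models across the overlaps of the open cover in Theorem \ref{T: existence of orbifold}, but this is taken care of by the partition-of-unity construction, and the subsequent averaging preserves the existence of a uniform lower curvature bound by compactness of $\mathcal{O}_X$.
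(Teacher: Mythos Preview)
Your proposal is correct and follows essentially the same route as the paper: start from the orbifold $\mathcal{O}_X$ of Theorem~\ref{T: existence of orbifold}, endow it with a $\soso$-invariant Riemannian orbifold metric, and observe that the underlying length space is then an Alexandrov $4$-space with the required equivariant properties. The paper simply cites \cite[Theorem~3.65]{AleBet} for the existence of the invariant metric and asserts that it induces an Alexandrov metric, whereas you spell out the partition-of-unity plus averaging construction and the local-quotient/globalization argument explicitly; the extra detail is welcome but does not change the strategy.
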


\begin{proof}
Let $X$ be the underlying space of the orientable orbifold $\mathcal{O}_X$ obtained by Theorem~\ref{T: existence of orbifold}. As in the case of Riemannian manifolds, for the $\soso$-action on $\mathcal{O}_X$ we can construct an invariant orbifold metric via \cite[Theorem~3.65]{AleBet}. This induces the desired invariant Alexandrov metric.
\end{proof}

Combining Theorems~\ref{T:Equivalent iff orbit spaces are isomorphic}, \ref{T: existence of orbifold},  and \ref{T: Each legally weighted 2-space represents an Alex space}  we conclude the following

\begin{cor}
Let $X$ be a closed orientable $4$-dimensional Alexandrov space with a $\soso$-action by isometries. Then $X$ is equivariantly homeomorphic to an orbifold.
\end{cor}

\section{Basic topological recognition}
\label{S:BASIC_TOPOLOGICAL_RECOGNITION}

In this section we prove a basic topological recognition result: we show that given a closed, orientable Alexandrov $4$-space with an effective, isometric $\soso$-action, it is always possible to decompose it as an ``equivariant connected sum''  of 
a closed smooth $4$-manifold with an effective $T^2$-action, with a finite number of  ``simple'' closed, orientable Alexandrov $4$-spaces with standard $\soso$-actions; the connected sum is done along  the boundaries of tubular neighborhoods of some circle orbits. 
To make this statement precise we define these simple closed Alexandrov $4$-spaces with an isometric $\soso$-action.

\begin{definition} Let $X$ be a closed, orientable Alexandrov $4$-space with an effective, isometric $\soso$-action. We say that $X$ is \textit{simple} if $X^\ast$ is homeomorphic to a $2$-disk and $E=\emptyset$, $C=\emptyset$ and $RF\cup SF\neq \emptyset$. 
\end{definition}

We observe that the set of invariants \eqref{EQ:INVARIANTS} associated to a simple Alexandrov $4$-space is of the form 
\[
\left\{(0,0);\varepsilon;0; - ; \big\{\big((a_{1,1},b_{1,1}),f_{1,1},(a_{1,2},b_{1,2}),f_{1,2},\ldots, (a_{1,r_{1}},b_{1,r_{1}}), f_{1,r_{1}} \big)\big \}; -  \right\}.
\]

Before proving the decomposition result of closed orientable $4$-dimensional Alexandrov spaces with a $\soso$-action by isometries, we present some  examples of simple Alexandrov spaces. At the present moment we are not aware of a complete list of 
simple spaces.

\begin{example}
Let $\soso$ act on the lens space $L(r, s)$ of cohomogeneity one with the group diagram $(\soso, 1, G(p,q), G(m,n))$, where $pn-mq=r$.  Then $\susp(L(r, s))$ with the suspension action of $\soso$  is a simple Alexandrov space.  This action has two singular fixed points and its set of invariants  is as follows:

\[
\left\{(0,0);\varepsilon;0; - ; \big\{\big( (p,q),r , (m, n), -r\big)\big\}; -  \right\}.
\]
\end{example}

\begin{example}\label{EX: weighted projective space}
This example deals with a $\soso$-action on a weighted projective space, which is defined as follows: Consider the unit sphere
\[
\Ss^5 =\{(z_1, z_2, z_3)\mid \sum_{i=1}^3|z_i|^2=1\}\subseteq \mathbb{C}^3,
\] 
and define an action of the circle $\Circ$ on $\Ss^5$ as follows:

\[
e^{i\varphi}.(z_1, z_2, z_3)=(e^{ir_1\varphi}z_1, e^{ir_2\varphi}z_2, e^{ir_3\varphi}z_3).
\]
Here the $r_i$'s, are coprime integers. The quotient space  of the sphere by this action 
\[
W\mathbb{P}(r_1, r_2, r_3)=\Ss^5/\Circ
\]
is called a \emph{weighted projective space}. Note that $W\mathbb{P}(r_1, r_2, r_3)$ is a compact Riemannian orbifold and in particular an Alexandrov space. We now define an effective $\soso$-action on $W\mathbb{P}(r_1, r_2, r_3)$.  We assume that each $r_i$ is positive, and we choose three pairs of coprime integers  $(m_1, n_1)$, $(m_2, n_2)$, and $(m_3, n_3)$, satisfying  
\begin{align*}
m_1n_{2}-m_{2}n_1 &= r_{2},\\
m_2n_{3}-m_{3}n_2 &= -r_{3},\\
m_3n_{1}-m_{1}n_3 &= r_{1}.
\end{align*}

We define an $\soso$-action on $\Ss^5$ as 
\[
(\varphi, \theta).(z_1, z_2, z_3)=(z_1, e^{i\frac{m_3\varphi+n_3\theta}{r_1}}z_2, e^{i\frac{m_1\varphi+n_1\theta}{r_1}}z_3).
\]
Since the actions of the circle  $\Circ$ commuts with the action of $\soso$ on $\Ss^5$,  the action of $\soso$ on $\Ss^5$ descends to an action on $W\mathbb{P}(r_1, r_2, r_3)$. This action has three fixed points and the set of invariants of the action is:

\[
\left\{(0,0);\varepsilon;0; - ; \big\{\big((m_1,n_1),r_2 , (m_2, n_2), -r_3, (m_3, n_3), r_1\big)\big\}; -  \right\}.
\]
\end{example}

\subsection*{$C$-Equivariant connected sums}

We describe a similar construction to that of the usual \textit{equivariant connected sums} which are performed at $RF$-orbits (see \cite{OrlRayI, OrlRayII}). The core difference is that it is performed at $C$-orbits.

Let $X_1$ and $X_2$ be two closed, orientable Alexandrov $4$-spaces equipped with effective and isometric $\soso$-actions, and let $\pi_i: X_i \to X_i^\ast$ denote the canonical projection maps for $i=1,2$.  We let $C_i$, $RF_i$, $SF_i$ and $E_i$ denote the subsets of $X_i$ consisting of points on circular orbits, topologically regular fixed points, topologically singular fixed points and exceptional orbits respectively for $i=1,2$. We assume that  $C_i\neq \emptyset$ for $i=1,2$. Furthermore, we assume that there exist circular orbits $x_i^\ast\in X_i^\ast$ having the same isotropy, that is, there exist $x_{i}\in X_{i}$ with $\pi_{i}(G(x_{i}))=x_{i}^{\ast}$ and $G_{x_{i}}=G(m,n)$ for some $m,n$, with  $\mathrm{gcd}(m, n)=1$, for $i=1,2$. 

Now we consider invariant closed tubular neighborhoods $\overline{B_i}$ of each orbit $\pi_{i}^{-1}(x_i)$,  sufficiently small so that $\overline{B_i}\cap SF_i= \overline{B_i}\cap RF_i=\overline{B_i}\cap E_i=\emptyset$ for $i=1,2$.
Since the actions are by  isometries, $\partial \overline{B_i}$ is an invariant subset of $X_i$.   It follows from the classification of cohomogeneity-one $3$-manifolds \cite{Neu} (see Subsection \ref{SS:Cohomogeneity-one 3-manifolds}) that the restriction of the $\soso$-action on $X_i$ to $\partial \overline{B_i}$ is equivalent to the cohomogeneity-one action with group diagram $(\soso, G(m,n), G(m,n), \{1\})$. Note that this in particular shows that $\partial \overline{B_i}$ is equivariantly homeomorphic to $S^2\times S^1$ for $i=1,2$ (see Table~\ref{TBL:cohom1-3mflds}). Hence, there exists an equivariant homeomorphism $\Psi: \partial \overline{B_1}\to \partial\overline{B_2}$. 

In a similar fashion to that of the $RF$-equivariant connected sums we consider a space $Z$ obtained by gluing $X_1\setminus B_1$ with $X_2\setminus B_2$ along their homeomorphic $S^2\times S^1$ boundaries via $\Psi$.

\begin{definition} We say that the space $Z$ is the \textit{$C$-equivariant connected sum of $X_1$ and $X_2$ with respect to the circular orbits $x_1^\ast$ and $x_2^\ast$} and denote it by $X_1\#_C X_2$.
\end{definition}

\begin{rem}\label{R:Gluing_Map_in_the_orbit_space}
Since $\partial\overline{B_{i}}$ is  $\soso$-invariant and $\Psi$ is an equivariant homeomorphism, one can define a homeomorphism $\Psi^{\ast}\colon (\partial\overline{B_{1}})^{\ast}\to (\partial\overline{B_{2}})^{\ast}$ such that the following diagram commutes:
\begin{figure}[H]
\centering
\centering
\begin{tikzpicture}[]
\draw [->] (-1.6, 0) -- (-.6,0) ;
\node at (-1.7,0) [left ]{$\ \ \ \ \partial\overline{B_{1}}$};
\node at (-.2,0)  {$\ \ \ \ \partial\overline{B_{2}}$};
\draw [->] (-1.6,-2) -- (-0.6,-2) ;
\node at (-1.7,-2) [left ]{$\ \ \ \  (\partial\overline{B_{1}})^{\ast}$};
\node at (1.1,-2) [left ]{$\ \ \ \  (\partial\overline{B_{2}})^{\ast}.$};
\draw [->] (-2, -.3) -- (-2,-1.7) ;
\draw [->] (-.2, -.3) -- (-0.2,-1.7) ;
\node at (-1.4, 0) [above ]{$\ \ \ \  \Psi$};
\node at (-1.4,-2) [below ]{$\ \ \ \  \Psi^{\ast}$};
\node at (-0.6, -1) [right ]{$\ \ \ \  \pi_{2}$};
\node at (-2.1,-1) [left]{$\ \ \ \ \pi_{1}$};
\end{tikzpicture}
\end{figure}

\end{rem}

Recall from Subsection~\ref{SS:One dimensional orbits} and the Slice Theorem that $\overline {B_{i}}$ is equivariantly homeomorphic to $(\soso\times C(\Ss^{2}))/ G(m,n)$,  where $C(\Ss^{2})$ is a closed cone over $\Ss^{2}$, and its  boundary  is equivariantly  homeomorphic  to $(\soso\times \Ss^{2}\times 1)/G(m,n)$.  Note that   ${B_i}$ is mapped to   the shaded subset of $X_{i}^{\ast}$ as in Figure~\ref{FIG:Circular_Orbit_C_Connected} and its boundary is mapped to the arc $\arc{abc}$.   

\begin{figure}[ht]
\begin{center}
\begin{tikzpicture}[rotate=00,scale = 0.6]
\draw[dashed] (0,3) arc (0:90:4cm and 1cm);
\draw [dashed] (-4,4) arc (90:180:4cm and 1cm);
\node   at (-4,2.1) [above] {$\ \ \ \  (m, n)$}; 
\draw (-1.6,-3) arc (35:145:2.5cm and 4cm); 
     \begin{scope}  
  \clip  (-5,2.05) arc (180:365:1.5cm and 1.4cm); 
  \path[fill=white!25!gray](-5,2.05) arc (180:365:1.5cm and 1.4cm); 
   \end{scope}
     \begin{scope} 
      \clip (0,3) arc (0:-90:4cm and 1cm);
  \path[fill=white](-5,2.05) arc (180:365:1.5cm and 1.4cm); 
    \end{scope}
     \begin{scope} 
  \clip(-8,3) arc (180:283:4cm and 1cm);
  \path[fill=white](-5,2.05) arc (180:365:1.5cm and 1.4cm); 
    \end{scope}
\draw  (0,3) arc (0:-90:4cm and 1cm);
\draw (-8,3) arc (180:270:4cm and 1cm);
  \draw[thick](-5,2.05) arc (180:364:1.5cm and 1.4cm); 
\draw (-8.5,-2.86) arc (-26:13:6cm and 9cm); 
\draw(1.30,-2.4) arc (-68:-1:-2cm and 6cm); 
\draw  [dashed](-6,-0.5) arc (0:50:3cm and 1.5cm); 
\draw  (-6,-0.5) arc (-50:0:-3cm and 1.5cm) ; 
\draw [dashed](-2,-0.3) arc (0:50:-3cm and 1.5cm); 
\draw (-2,-0.3) arc (-50:0:3cm and 1.5cm) ; 
\node   at (-5.7,2.05) [below] {$\ \ \ \  a$};
\node   at (-4.2,0.8) [below] {$\ \ \ \  b$};
\node   at (-2.2,2.05) [below] {$\ \ \ \  c$};
\draw [color = black, fill=black] (-3.6,2) circle (1pt);
\node   at (-4,1.2) [above] {$\ \ \ \  x_{i}$}; 
\end{tikzpicture}
\caption{The image of $\overline{B_{i}}$ in $X_{i}^{\ast}$.}
\label{FIG:Circular_Orbit_C_Connected}
\end{center}
\end{figure}
Hence, the image of $X_{i}\setminus B_{i}$ is merely  $X_{i}^{\ast}$ with the shaded area removed (see Figure~\ref{FIG:Gluing}).  It is clear from the gluing procedure that $Z^{\ast}$ is just the gluing of $(X_1\setminus B_1)^{\ast}$ and $(X_2\setminus B_2)^{\ast}$ along $(\partial B_{1})^{\ast}$ and $(\partial B_{2})^{\ast}$ via the homeomorphism $\Psi^{\ast}$ as illustrated in Figure~\ref{FIG:Gluing}, for $X_1$ and $X_2$ simply connected.

\begin{figure}[ht]
\begin{center}

\includegraphics[scale=0.5]{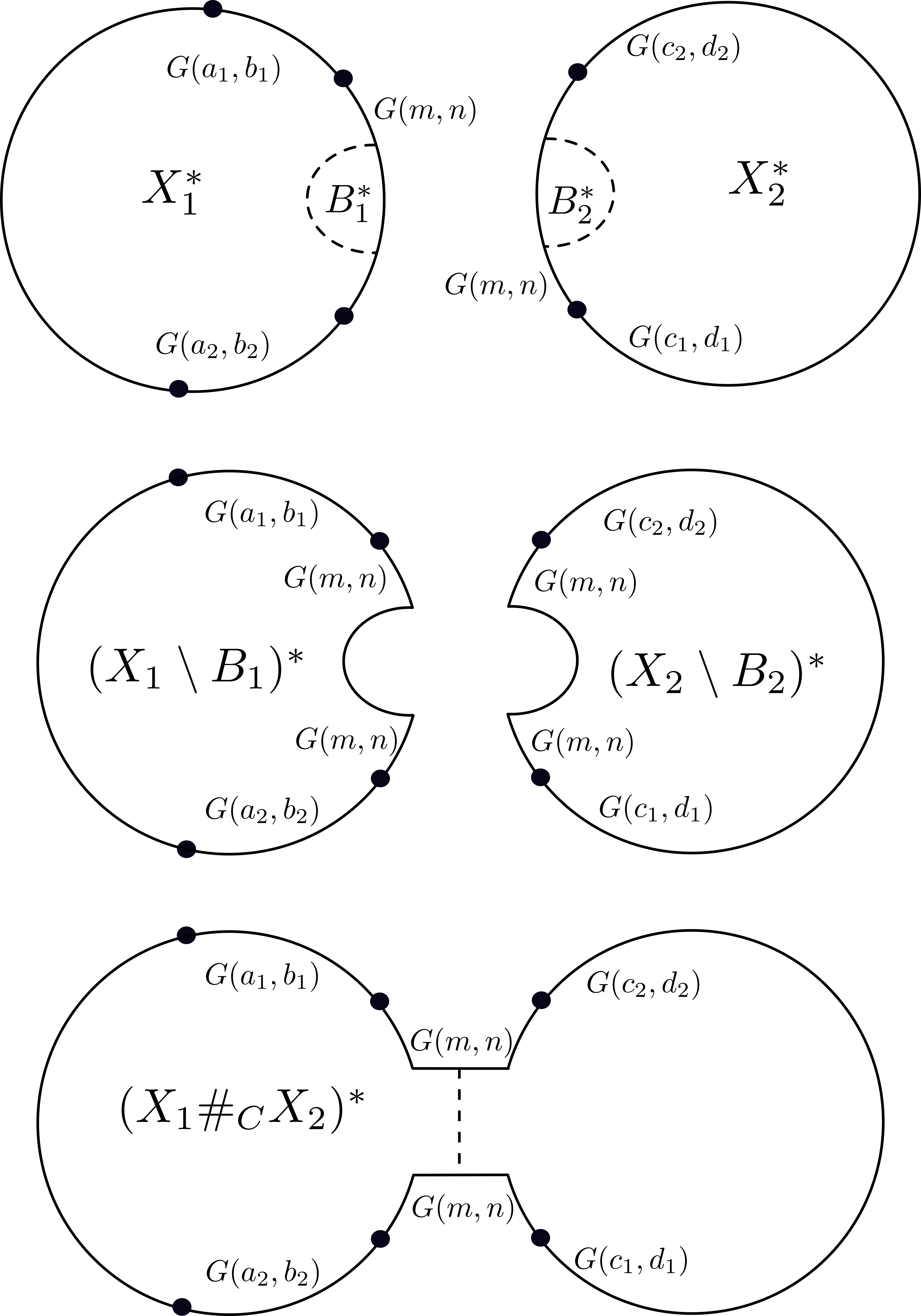}
\end{center}
\caption{Gluing $(X_1\setminus B_1)^{\ast}$ and $(X_2\setminus B_2)^{\ast}$ along $(\partial\overline{B_{i}})^{\ast}$ via $\Psi^{\ast}$.}
\label{FIG:Gluing}
\end{figure}

Therefore, we have shown the following lemma. 

\begin{lem}\label{L:Orbit_Space_of_C-Connected_Sum}
Let $X_{i}$, $i=1, 2$ and $X_1\#_C X_2$ be as above. Then we have 
$$(X_1\#_CX_2)^\ast=(X_1\setminus B_1)^{\ast}\#_C(X_2\setminus B_2)^{\ast},$$
where $\#_{C}$ on the right hand side means the gluing along $(\partial\overline{B_{i}})^{\ast}$ via the map $\Psi^{\ast}$ defined in Remark~\ref{R:Gluing_Map_in_the_orbit_space}. In particular,  if the boundary component of $X_{1}^{\ast}$ containing $x_{1}^{\ast}$ has only $C$-orbits with isotropy $G(m, n)$, then the boundary component of $Z$ obtained as a result of $C$-equivariant connected sum and the boundary component of $X_{2}$ containing $x_{2}$ have the same weights. 
\end{lem}

Let us make some remarks. The equivariant homeomorphism type of $X_1\#_C X_2$ depends on the choices of the circular orbits used in the construction. For the sake of simplicity, the notation we use does not explicitly display this fact. The space $X_1\#_CX_2$ is naturally equipped with an effective $\soso$-action. Moreover, as the orbit space of such action is legally weighted, it follows from Theorem \ref{T: existence of orbifold} that $X_1\#_CX_2$ is equivariantly homeomorphic to the closed, orientable Alexandrov $4$-space having an effective and isometric $\soso$-action (uniquely determined up to equivariant homeomorphism) having  $(X_1\#_CX_2)^\ast$ as orbit space. 

With these definitions in hand we state the main result of the section. 

\begin{thm}
Let $X$ be a closed, orientable Alexandrov $4$-space with an effective, isometric $\soso$-action. Then $X$ is equivariantly homeomorphic to $C$-equivariant connected sums of a single closed orientable $4$-manifold with an effective $\soso$-action and a collection of simple Alexandrov $4$-spaces. 
\end{thm}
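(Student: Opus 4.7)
The strategy is to isolate, for each boundary component of $X^*$ containing topologically singular fixed points, a collar of that component in $X$ by cutting along an embedded $S^2\times S^1$ situated near a chosen $C$-orbit on that component; capping the two sides of each cut then exhibits $X$ as a $\#_C$-sum of simple pieces with a closed 4-manifold carrying an effective $\soso$-action.

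Let $\partial_1X^*,\dots,\partial_rX^*$ be the boundary components of $X^*$ that meet $SF^*$. If $r=0$ then $TS_X^0=\emptyset$ by Proposition \ref{P: S_X is finite collection of circle and points}, so $X$ is already a closed 4-manifold with effective, isometric $\soso$-action and there is nothing to prove. Otherwise, for each $j$ fix a $C$-orbit $x_j^*\in \partial_jX^*$ with isotropy $H_j=G(m_j,n_j)$, and a short arc $\alpha_j\subset \partial_jX^*$ around $x_j^*$ every orbit of which has isotropy $H_j$. The key geometric step is to produce pairwise disjoint arcs $\gamma_1,\dots,\gamma_r\subset X^*$ such that: (a) the two endpoints of $\gamma_j$ are those of $\alpha_j$; (b) the interior of $\gamma_j$ is contained in the principal stratum $P^*$; and (c) $\gamma_j\cup(\partial_jX^*\setminus\mathrm{int}(\alpha_j))$ bounds a closed topological disk $D_j^*\subset X^*$ that contains every $RF$- and $SF$-orbit of $\partial_jX^*$ and no other fixed point, exceptional orbit, or boundary component of $X^*$. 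The existence of such $\gamma_j$'s is a routine 2-manifold argument: follow $\partial_jX^*$ slightly inside $X^*$ avoiding the finitely many $E^*$-orbits and the other boundary components of $X^*$, and close up near $x_j^*$.

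Set $Z_j:=\pi^{-1}(D_j^*)$. Since the interior of $\gamma_j$ lies in $P^*$ and both endpoints are $C$-orbits of isotropy $H_j$, the cohomogeneity-one computation used above when defining $\#_C$ (ultimately a consequence of Neumann's classification \cite{Neu}) gives $\partial Z_j=\pi^{-1}(\gamma_j)\cong S^2\times S^1$ with group diagram $(\soso,H_j,H_j,\{1\})$. Let $N_j$ and $N_j'$ be invariant tubular neighborhoods of $C$-orbits of isotropy $H_j$ in auxiliary closed Alexandrov 4-spaces, so $\partial N_j\cong\partial N_j'\cong S^2\times S^1$ with the same standard $\soso$-action, and define
\begin{equation*}
S_j\;:=\;Z_j\cup_{\Psi_j} N_j, \qquad M\;:=\;\Big(X\setminus\bigsqcup_{j=1}^r\mathrm{int}(Z_j)\Big)\cup_{\sqcup_j\Psi_j'}\bigsqcup_{j=1}^r N_j',
\end{equation*}
where $\Psi_j,\Psi_j'$ are equivariant homeomorphisms of the $S^2\times S^1$-boundaries, which exist because any two cohomogeneity-one $\soso$-spaces with the above group diagram are equivariantly homeomorphic. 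Reading off the orbit-space data, $S_j^*$ is a topological 2-disk whose boundary retains every $RF$- and $SF$-orbit of $\partial_jX^*$ with the original $C$-arcs intact and no exceptional orbits, so $S_j$ is simple in the sense defined above. On the other hand, $M^*$ is $X^*$ with each bad boundary component replaced by a circle of $C$-orbits of isotropy $H_j$, so $M^*$ has no $SF$-orbits; by the analysis of Section \ref{S:Orbit space and Orbit types} and Theorem \ref{T: existence of orbifold}, $M$ is therefore equivariantly homeomorphic to a closed, oriented 4-manifold with an effective $\soso$-action, i.e. an Orlik--Raymond manifold. Inspecting the gluings shows that reassembly yields an equivariant homeomorphism
\begin{equation*}
M\,\#_C\,S_1\,\#_C\cdots\#_C\,S_r\;\longrightarrow\; X,
\end{equation*}
each $\#_C$ being performed at a $C$-orbit of isotropy $H_j$ on the corresponding new boundary component of $M$.

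The principal obstacle is bookkeeping rather than any single hard step: one must verify that the $D_j^*$'s can be chosen so that both $S_j^*$ and $M^*$ are legally weighted (which is why $\alpha_j$ is required to lie entirely in the $C$-stratum of isotropy $H_j$, so that the new $C$-arc attaches at endpoints with matching isotropy), that each piece $S_j$ and $M$ is realizable as an Alexandrov (respectively smooth) $\soso$-space with the prescribed invariants, and that the iterated $\#_C$ is well-defined up to equivariant homeomorphism. For the realizability one invokes Theorem \ref{T: existence of orbifold}, and for the final identification of $M\#_CS_1\#_C\cdots\#_CS_r$ with $X$ one applies Theorem \ref{T:Equivalent iff orbit spaces are isomorphic} to the matching weighted orbit spaces. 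The routine existence of $\gamma_j$ and the identification $\partial Z_j\cong S^2\times S^1$ essentially already appear in the definition of the $C$-equivariant connected sum.
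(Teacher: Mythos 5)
Your proposal is correct, and it arrives at the paper's decomposition from the opposite direction. The paper never cuts $X$ at all: it reads off the weighted orbit space $X^*$, writes down by hand the invariants of a closed $4$-manifold $M$ (each boundary component of $X^*$ carrying $SF$-orbits is replaced by a circle of $C$-orbits with isotropy $\left\langle a_{j,1},b_{j,1}\right\rangle$) and of one simple space per such component, forms their $C$-equivariant connected sum, and checks that the resulting weighted orbit space is isomorphic to $X^*$, so Theorem \ref{T:Equivalent iff orbit spaces are isomorphic} finishes the argument. You instead exhibit the decomposition inside $X$ by cutting along invariant copies of $S^2\times S^1$ lying over arcs $\gamma_j$ parallel to the singular boundary components and then capping with model tubular neighborhoods of $C$-orbits. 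Your route is more geometric and makes the separating hypersurfaces explicit, at the cost of the verifications you yourself flag: chiefly that $\gamma_j$ can be chosen so that $\pi^{-1}(\gamma_j)$ is genuinely a cohomogeneity-one $3$-manifold with diagram $(\soso,H_j,H_j,\{1\})$ (in practice one makes $\gamma_j$ agree near its endpoints with the boundary of a slice neighborhood, exactly as in the definition of $\#_C$), and that the reassembled space is independent of the chosen boundary identifications. The paper's route avoids these issues by working purely with invariants, but must then know the abstract invariant sets are realized, which is where the Orlik--Raymond realization and Theorem \ref{T: existence of orbifold} enter for it as well. Since both arguments funnel through the same two pillars — Neumann's identification of the relevant cohomogeneity-one $3$-space as $S^2\times S^1$ and Theorem \ref{T:Equivalent iff orbit spaces are isomorphic}, which you do invoke to close the loop — your proof is complete modulo the routine bookkeeping you already identify; the only citation I would adjust is that $M$ being a topological manifold follows directly from your construction (all points of $TS_X$ lie in the excised pieces $Z_j$), rather than from Theorem \ref{T: existence of orbifold}.
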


\begin{proof}
We begin by noting that if $SF=\emptyset$ then the result is immediate as $X$ is equivariantly homeomorphic to a $4$-manifold and there is no need to take $C$-equivariant connected sums. Therefore, we assume that $SF\neq \emptyset$. 

Let $X$ be determined by the following set of invariants
\[ 
\big\{(b_1,b_2);\varepsilon;g; \big\{ \left\langle p_i,q_i\right\rangle \big\}_{i=1}^s; \big\{ \big( (a_{\ell},b_{\ell}),f_{\ell}\big) \big\}_{\ell=1}^{t} ; \big\{ (\alpha_j;\gamma_{j,1},\gamma_{j,2}) \big\}_{j=1}^k \big\}.
\]
By our assumption on $SF$ we have that $t>0$. Then we can assume without loss of generality, by relabeling the boundary components if necessary, that there exists a maximal index $0\leq q< t$ such that for each $1\leq \ell \leq q$  we have $f_{\ell,w}=\pm 1$  for all $1\leq \ell \leq q$; i.e.  up to the index $q$ all the boundary components containing fixed points do not contain $SF$-points. 


We now consider  a collection of simple Alexandrov $4$-spaces $X_j$ with $j=1,\ldots, t-q$, given by
\[
\left\{(0,0);\varepsilon;0; - ; \big\{ \big( (a^j_{1,1},b^j_{1,1}),f^j_{1,1}, (a^j_{1,2},b^j_{1,2}), f^j_{1,2},\ldots, (a^j_{1,r_{j+q}}, b^j_{1,r_{j+q}}),f^j_{1,r_{j+q}} \big) \big\}; -  \right\},
\]
satisfying that $(a^j_{1,w},b^j_{1,w})=(a_{j+q,w}, b_{j+q,w})$ for all $w=1,\ldots, r_{j+q}$. 
We also consider a closed, orientable $4$-manifold $M$ with an effective $\soso$-action determined by the invariants
\[
 \big\{(b_1,b_2);\varepsilon;g; \big\{ \left\langle p_i,q_i\right\rangle \big\}_{i=1}^{\overline{s}}; \big\{ \big( (\overline{a}_{\ell},\overline{b}_{\ell}),\overline{f}_{\ell}\big) \big\}_{\ell=1}^{\overline{t}} ; \big\{ (\alpha_j;\gamma_{j,1},\gamma_{j,2}) \big\}_{j=1}^k \big\},
\]
where we recall that $\big((\overline{a}_{\ell},\overline{b}_{\ell}),\overline{f}_{\ell}\big)$ denotes $\big((\overline{a}_{\ell,1},\overline{b}_{\ell,1}),\overline{f}_{\ell,1},\ldots,(\overline{a}_{\ell,\overline{r}_{\ell}},\overline{b}_{\ell,\overline{r}_{\ell}}),\overline{f}_{\ell,\overline{r}_{\ell}} \big)$, satisfying the following conditions:

\begin{itemize}
\item[(i)] $\overline{s}= s+t-q$ and $\overline{t}= q$,
\item[(ii)] $(\overline{a}_{\ell,w},\overline{b}_{\ell,w})= (a_{\ell,w},b_{\ell,w})$ (and therefore $\overline{f}_{\ell,w}=f_{\ell,w}=\pm 1$) for all $w=1,\ldots,\bar{r}_{\ell}$, and $\ell=1,\ldots, \overline{t}$,
\item[(iii)] $\left\langle p_i,q_i \right\rangle = \left\langle a_{i+q,1}, b_{i+q,1} \right\rangle$ for all $i=1,\ldots, t-q$. 
\end{itemize}


By the third point in the conditions defining $M$, for each $j=1,\ldots, t-q$ there exist a circular orbit $x^\ast\in M^\ast$ contained in the $j$-th connected component of $\partial M^\ast$ containing only circular orbits and a circular orbit $x_j^\ast\in X_j^\ast$ having the same weight, namely $(a_{j+q,1}, b_{j+q,1})$. Therefore the $C$-equivariant connected sum of $M$ and $X_j$ can be performed with respect to $x^\ast$ and $x_j^\ast$ for all $j=1,\ldots, t-q$ yielding a space $M\#_CX_1\#_C\ldots \#_C X_{t-q}$ with an effective $\soso$-action whose orbit space is isomorphic to $X^\ast$ by Lemma~\ref{L:Orbit_Space_of_C-Connected_Sum}. Hence by Theorem \ref{T:Equivalent iff orbit spaces are isomorphic} we have that $X$ is equivariantly homeomorphic to $M\#_CX_1\#_C\ldots \#_C X_{t-q}$. 
\end{proof}

\begin{rem} To achieve a full homeomorphism classification of closed, orientable Alexandrov $4$-spaces with effective and isometric $\soso$-actions it is sufficient to have a classification of the homeomorphism type of simple Alexandrov $4$-spaces and combine this with the previous homeomorphism classification of Orlik-Raymond \cite{OrlRayI, OrlRayII} and Pao \cite{Pao2,Pao1}. 
\end{rem}

\bibliographystyle{siam}
\nocite{*}
\bibliography{Bibliography}

\end{document}